\tikzset{>={Latex[width=2mm,length=2mm]}}
\def\greyboxvar#1#2
\def\greybox#1
\renewcommand{\arraystretch}{1.3} 
\def\gcd{\mathop{\rm gcd}\nolimits}
\newcommand\divides{\mid}
\def\Re{\mathop{\rm Re}\nolimits}
\def\pmat#1{\begin{pmatrix}#1\end{pmatrix}}
\def\mat#1{\begin{matrix}#1\end{matrix}}
\def\question#1{{\bf Question: }#1}
\def\question#1{}
\def\cG{{\cal G}}
\def\cL{{\cal L}}
\def\cO{{\cal O}}
\def\cT{{\cal T}}
\def\R{\mathbb{R}}
\def\CC{\mathbb{C}}
\def\FF{\mathbb{F}}
\def\HH{\mathbb{H}}
\def\Cd{\C^d}
\def\Fd{\FF^d}
\def\Hd{\HH^d}
\def\Rd{\R^d}
\def\C{\mathbb{C}}
\newcommand{\RR}{\mathbb{R}}
\newtheorem{theorem}{Theorem}[section]
\newtheorem{corollary}{Corollary}[section]
\newtheorem{lemma}{Lemma}[section]
\newtheorem{example}{Example}[section]
\newtheorem{proposition}{Proposition}[section]
\newenvironment{proof}{{\noindent \it
Proof.}}{\hfill$\Box$\medskip}
\newif\ifdraft\def\draft{\drafttrue\hoffset=.8truecm\showlabeltrue
\def\comment##1{{\bf comment: ##1}}
\headline={\sevenrm \hfill \ifx\filenamed\undefined\jobname\else\filenamed\fi%
(.tex) (as of \ifx\updated\undefined???\else\updated\fi)
 \TeX'ed at {\hour\time\divide\hour by 60{}%
\minutes\hour\multiply\minutes by 60{}%
\advance\time by -\minutes
\the\hour:\ifnum\time<10{}0\fi\the\time\  on \today\hfill}}
}
\def\inpro#1{\langle#1\rangle}
\def\ip#1{\langle\kern-.28em\langle#1\rangle\kern-.28em\rangle_\nu}
\def\cC{{\cal C}}
\def\cD{{\cal D}}
\def\cO{{\cal O}}
\def\cT{{\cal T}}
\def\cI{{\cal I}}
\def\cH{{\cal H}}
\def\openR{{{\rm I}\kern-.16em {\rm R}}}
\def\Fd{\FF^d}
\let\ga\alpha
\let\gb\beta
\let\gd\delta
\let\gl\lambda
\let\gL\Lambda
\let\gs\sigma
\let\go\omega
\let\gO\Omega
\let\ga\alpha
\let\gb\beta
\let\gd\delta
\let\gs\sigma
\def\inpro#1{\langle#1\rangle}
\def\GL{\mathop{\it GL}\nolimits}
\def\Iff{\hskip1em\Longleftrightarrow\hskip1em}
\def\Implies{\hskip1em\Longrightarrow\hskip1em}
\def\formeq{\the\sectionno.\the\equationno}  
\def\elabel#1/#2/#3/{\global\advance\equationno by 1 %
\ifx#1\empty\else\emember#1%
\ifshowlabel\marginal{\string#1}\fi\fi%
\ifmmode\eqno{#3(\formeq#2)}\else#3\formeq#2\fi} 
\def\makeblanksquare#1#2{
\dimen0=#1pt\advance\dimen0 by -#2pt
      \vrule height#1pt width#2pt depth0pt\kern-#2pt
      \vrule height#1pt width#1pt depth-\dimen0 \kern-#1pt
      \vrule height#2pt width#1pt depth0pt \kern-#2pt
      \vrule height#1pt width#2pt depth0pt
}
\title{\bf 
The quaternionic systems of imprimitivity for the reflection groups of rank two
}
\author{Shayne Waldron\\ 
 \\
Department of Mathematics \\ University of Auckland\\
Private
Bag 92019, Auckland, New Zealand\\
e--mail: waldron@math.auckland.ac.nz}
\begin{document}

\maketitle 

\begin{abstract}

Given an explicit presentation of a reflection group of rank two 
(or any rank two group for that matter), we give a simple procedure for calculating
all its systems of imprimitivity, when viewed as a matrix group
over the quaternions.
This is applied to all the reflection groups, in particular the quaternionic 
reflection groups, thereby unifying a number of results and ideas in the literature.
For example, a primitive complex reflection group of rank two 
has either uncountably many quaternionic
	systems of imprimitivity ($3$ cases) or none ($16$ cases).


\end{abstract}

\bigskip
\vfill

\noindent {\bf Key Words:}
systems of imprimitivity,
irreducible groups of rank two,
imprimitive quaternionic reflection groups,
reflection systems,
binary polyhedral groups,
dicyclic groups,
finite collineation groups.

\bigskip
\noindent {\bf AMS (MOS) Subject Classifications:}
primary
15B33, \ifdraft (Matrices over special rings (quaternions, finite fields, etc.) \else\fi
20F55, \ifdraft	Reflection and Coxeter groups (group-theoretic aspects) \else\fi
20G20, \ifdraft Linear algebraic groups over the reals, the complexes, the quaternions \else\fi
51F15, \ifdraft	Reflection groups, reflection geometries \else\fi
\quad
secondary
20C25, \ifdraft Projective representations and multipliers \else\fi
51M20. \ifdraft (Polyhedra and polytopes; regular figures, division of spaces [See also 51F15]) \else\fi

\vskip .5 truecm
\hrule
\newpage

\section{Introduction}

The (irreducible) reflection groups, i.e., finite groups generated by reflections,
have been classified into those which are real \cite{C34}, complex \cite{ST54}
and quaternionic \cite{C80}.
A {\bf reflection} on $\Rd$, $\Cd$ or $\Hd$ is a linear map $r$ which pointwise 
fixes a subspace of dimension $d-1$, and has finite order, i.e., satisfies
$r v= v\xi$ for some nonzero vector $v$ (called a root of the reflection)
and a scalar $\xi\ne1$ with $\xi^m=1$ ($m$ the order of $r$). 
For real reflections $\xi=-1$, and for complex reflections any $\gl\ne1$ in 
the cyclic group $\inpro{\go}$ 
generated by the $m$-th root of unity $\go=\xi$
gives a reflection which is a power of $r$. 
In the quaternionic setting, which is of ongoing interest, see
\cite{BST23}, \cite{S23}, \cite{DZ24}, \cite{BW25},
the group $\inpro{\go}$ is replaced by a finite (multiplicative) 
subgroup of $\HH^*=\HH\setminus\{0\}$.   

We will use $\FF=\RR,\CC,\HH$ when we can 
treat the three cases simultaneously
(we seek to unify the theory as much as is possible). 
The subgroups of $G\subset\GL(\FF^d)$, such as the reflection groups, 
are classified up to a change of basis (which preserves reflections),
i.e., conjugation in $\GL(\FF^d)$. 
We write $\cong_\FF$ for conjugacy in $\GL(\FF^d)$. 
See \cite{Z97}, \cite{CS03}, \cite{V21} for general facts about groups of matrices over
the quaternions $\HH$.

If $G\subset\GL(\FF^d)$ is
group, then we say that it (or its linear action on $\Fd$) has a 
{\bf system of imprimitivity} $V_1,\ldots,V_m$ of $m\ge2$ nonzero subspaces if
the action of $G$ permutes the $V_j$'s and $\Fd=V_1\oplus\cdots\oplus V_m$ 
(internal direct sum). In this case, $G$ is said to be {\bf imprimitive}, and otherwise
it is {\bf primitive}. If $G$ is irreducible, i.e., $\{gv\}_{g\in G}$ spans $\Fd$ for
every $v\ne0$, and $G$ is a reflection group, 
then any system of imprimitivity must have $\dim_\FF(V_j)=1$, and
so the matrices of $G$ can be represented as monomial matrices (each row or column 
has exactly one nonzero entry) by choosing a basis from the system of imprimitivity.
It may be (as we will see for reflection groups) that a subgroup of
$\GL(\Rd)$ or $\GL(\Cd)$ is primitive, but is imprimitive when viewed
as a group of matrices over the larger field (division algebra)
$\CC,\HH$ or $\HH$, respectively.

The general question considered in this paper is: when does an 
irreducible group $G$ 
have a system of imprimitivity when viewed as matrices 
over $\RR,\CC,\HH$? This boils down to determining
$$ \hbox{\em Which (if any) changes of bases give $G$ as a monomial group of matrices?} $$
Here we consider the rank two groups (groups of $2\times2$ matrices over $\RR,\CC,\HH$),
which considerably simplifies the problem.
The main results are the {\em explicit} calculation of {\em all} the systems of imprimitivity
(including the quaternionic ones) for the 
real, complex and quaternionic reflection groups 
(Theorem \ref{realreflectsystems}, 
Theorems \ref{imprimiitivecomplexsys}, \ref{complexprimitivesys}, 
Theorems \ref{G(n,a,b,r)sys}, \ref{TOIsys}
and 
Table \ref{RealSystems-table}, Table \ref{ComplexSystems-table},
Table \ref{QuaternionicSystems-table}). The overall picture is as follows:
\begin{itemize}
\item There is just one imprimitive real reflection group $D_4=G(2,1,2)\cong_\CC G(4,4,2)$. 
		It has two real systems of
	primitivity, three complex systems of primitivity, 
and infinitely many quaternionic systems of primitivity.
\item The primitive real reflection groups have one complex system of imprimitivity,
	and infinitely many quaternionic systems of imprimitivity.
\item There are $16$ real and complex reflection groups with no quaternionic systems 
of primitivity (all of them complex), and all others have uncountably many systems.
\item The imprimitive complex reflection group $G(4,2,2)$ has three
	complex systems of imprimitivity, and the others have just one.
All of them have infinitely many quaternionic systems of imprimitivity.
\item An imprimitive quaternionic reflection group has either one, two, three,
five (one case $G(2,1,2,1)$) systems of imprimitivity, 
		or infinitely many (the three cases where it is conjugate to the primitive complex reflection groups $G_{12},G_{13},G_{22}$).
\end{itemize}

We have three basic observations (which we will exploit repeatedly):

\begin{enumerate}
\item Enlarging a group decreases the systems of imprimitivity,
\end{enumerate}
i.e., any system of imprimitivity for $G$ is also a
system of imprimitivity for any subgroup $H\subset G$ 
($H$ may have more, e.g., consider $H=1$).
Moreover, the complex systems of imprimitivity 
for a group of real or complex matrices are unchanged if nonzero scalar matrices are added,
and so the systems of imprimitivity (over $\RR$ or $\CC$), depend only on the 
collineation group (i.e., the group of matrices up to scalar multiplication). 
Since each nonscalar matrix in $\GL(\CC^2)$ can be multiplied by two
scalars (the inverses of its eigenvalues) to obtain a reflection,
each collineation group is associated with a (maximal) reflection group
(see \cite{W26}), and so 
the systems of imprimitivity in $\CC^2$ for the finite rank two subgroups of $\GL(\CC^2)$
	are given by those of the corresponding (maximal) complex reflection groups.
The systems of imprimitivity over $\HH$ are seen to depend on the scalar matrices in $G$,
i.e., matrix groups over $\CC$ giving the same collineation group may have
different quaternionic systems of imprimitivity
(see Example \ref{collineationsystems}).

\begin{enumerate}
\setcounter{enumi}{1}
\item The systems of imprimitivity for a group $G$ depend only on its generators.
\item The finite group $G$ may be taken to unitary for the standard inner product 
$$\inpro{v,w}:=\sum_j\overline{v_j}w_j, \qquad v,w\in\Fd, $$
so that its systems of imprimitivity are orthogonal, 
and any possible change of basis matrix $U$ for a system of
	imprimitivity can be chosen to be unitary.
\end{enumerate}
Henceforth, we will assume that all groups are unitary, which simplifies 
finding their systems of primitivity (which are orthogonal and have
a unitary change of basis matrix), and $\Hd$ is a right vector space, 
so that linear maps (matrices) are applied on the left.

\section{How to find systems of imprimitivity}

We now consider all the possible orthogonal systems (the candidates for systems
of imprimitivity). There is of course the standard orthonormal basis $\{e_1,e_2\}$.
Any other orthogonal system must have a vector which is nonzero in the first coordinate
(otherwise it would give the standard basis), and so after scaling it is given 
by the equal-norm orthogonal vectors
$$ \pmat{1\cr q}, \quad \pmat{-\overline{q}\cr 1}, \qquad q\in\HH, \quad q\ne0. $$
We consider how many times an orthogonal system is given by 
a vector $(1,q)$, $q\in\HH$.

\begin{proposition}
\label{doublecountinprop}
Every orthogonal system for $\FF^2$ is given by a vector
	$$ \pmat{1\cr q}, \qquad q\in\FF, \quad |q|\le1, $$
with different values of $q$ giving different orthogonal systems, 
unless $|q|=1$, in which case the same orthogonal system is 
	given by $(1,\pm q)$ (i.e., these are counted twice above).
\end{proposition}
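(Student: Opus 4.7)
The plan is to first reduce every orthogonal system in $\FF^2$ to the canonical shape $\{(1,q),(-\bar q,1)\}$, then show one can always arrange $|q|\le1$, and finally identify the exact coincidences between different choices of $q$.

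For existence of the canonical form: any orthogonal system $\{v_1,v_2\}$ must contain a vector with nonzero first coordinate, otherwise both vectors would lie on the line $\spn(e_2)$ and could not be independent. After right-scaling that vector (we are in a right module) so that its first coordinate is $1$, it takes the form $(1,q)$ for some $q\in\FF$. The orthogonal complement of $(1,q)$ with respect to $\inpro{v,w}=\sum_j\bar v_jw_j$ is the right span of $(-\bar q,1)$, so the partner vector is a right scalar multiple of $(-\bar q,1)$, and rescaling makes it exactly $(-\bar q,1)$.

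Next, starting from the partner vector one obtains a second representation of the same system: renormalizing $(-\bar q,1)$ to first coordinate $1$ (for $q\ne0$) requires right-multiplication by $(-\bar q)^{-1}=-q/|q|^2$ (using $\bar q^{-1}=q/|q|^2$), and produces $(1,q')$ with $q'=-q/|q|^2$ and $|q'|=1/|q|$. Hence every system has a representative with $|q|\le1$. Conversely, if $(1,q)$ and $(1,q')$ with $q\ne q'$ represent the same system, then $(1,q')$ cannot be a right scalar multiple of $(1,q)$ (the only right scalar preserving the first coordinate is $1$), so $(1,q')$ must span the partner line, and the previous calculation forces $q'=-q/|q|^2$. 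Combined with $|q|,|q'|\le1$ this yields $|q|=|q'|=1$ and $q'=-q$, which is exactly the stated double-counting on the unit sphere (the case $q=0$ is covered separately, since then the standard basis is the unique representative).

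The main care required is in keeping straight the noncommutative right-module conventions for $\HH$: the side on which scalars act during rescaling, the sesquilinearity of the inner product, and the formula $\bar q^{-1}=q/|q|^2$ for quaternionic inversion. Once these are tracked consistently the argument specializes verbatim to $\RR$ and $\CC$, where the conventions are invisible.
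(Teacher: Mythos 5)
Your argument is correct and follows essentially the same route as the paper: both reduce to the normalized representative $(1,q)$ with partner $(-\overline{q},1)$, use the identity $(-\overline{q},1)=(1,-q/|q|^2)(-\overline{q})$ to enforce $|q|\le1$, and show that a coincidence with $q'\ne q$ forces $(1,q')$ onto the partner line, giving $|q|=|q'|=1$ and $q'=-q$. The only cosmetic difference is that the paper phrases the coincidence test via the inner product ($\overline{q}q'=-1$) rather than your right-scalar-multiple argument, but these are the same computation.
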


\begin{proof}
For $q\ne0$, we have
$$ \pmat{-\overline{q}\cr 1} = \pmat{1\cr -q/|q|^2}(-\overline{q}), $$
so that if $|q|>1$, the orthogonal system given by $(1,q)$ is also given 
by $(1,q')$, where
	$$ q'=-{1\over|q|^2} q, \qquad |q'|={1\over|q|}<1, $$
so that all orthogonal systems are obtained with the restriction $|q|\le1$. 

Suppose that $(1,q)$ and $(1,q')$ give the same orthogonal system, i.e., 
	one of  
	$$ \inpro{\pmat{1\cr q},\pmat{1\cr q'}}= 1+\overline{q}q'=0, \qquad
	\inpro{\pmat{1\cr q},\pmat{-\overline{q'}\cr 1}}= -\overline{q'}+\overline{q}=0 
\Iff q'=q, $$
holds. The systems could only be the same for $q'\ne q$, $|q|,|q'|\le1$ 
if the first holds, i.e.,
$$ \overline{q}q'=-1 \Implies |q|=|q'|=1, \quad q'=-q, $$
and we have the claimed double indexing.
\end{proof}

A unitary change of basis matrix for the orthogonal
system for $(1,q)$ is given by
\begin{equation}
\label{Uqdefn}
U={1\over\sqrt{1+|q|^2}} \pmat{1&\overline{q}\cr q&-1}.
\end{equation}
We note that the columns of $U$ have been scaled so that $U^2=I$, i.e., $U^*=U$.
All the possible unitary change of basis matrices are obtained by multiplying the
columns of $U$ by unit scalars. We will sometimes do this to obtain nice
formulas, see, e.g., (\ref{Uscale}).

We give a condition for a unitary matrix $g$ to have a 
system of imprimitivity given by $(1,q)$, i.e., for $U^{-1}gU=U^*gU$ to
be a monomial matrix.

\begin{lemma}
\label{abcdq-lemma}
The orthogonal system given by $(1,q)$, $q\in\HH$, is a system of imprimitivity for 
a unitary group of matrices $G\subset\GL(\FF^2)$ if and only if for every 
generator
$$ g=\pmat{a&b\cr c&d}, $$
in a set of generators for $G$, one of the following two equations holds
\begin{equation}
\label{abcdqeqns} 
a +bq +\overline{q}c +\overline{q}dq=0, \qquad qa +qbq -c -dq=0.
\end{equation}
\end{lemma}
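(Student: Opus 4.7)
The plan is to reduce the system-of-imprimitivity condition to a single scalar equation on each generator $g$ of $G$ by computing $U^*gU$, where $U$ is the Hermitian unitary in (\ref{Uqdefn}). By the second observation in the introduction, the orthogonal system given by $(1,q)$ is a system of imprimitivity for $G$ if and only if every generator $g$ preserves it, and $g$ preserves the system precisely when it either fixes both lines of the system (diagonal action) or swaps them (anti-diagonal action); equivalently, $U^{-1}gU=U^*gU$ is a monomial matrix.

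Next, I would cut the ``monomial'' condition down to the vanishing of a single entry of $A:=U^*gU$. Since $g$ and $U$ are unitary, so is $A$. A short check using the entries of $A^*A=I$ shows that a unitary $2\times 2$ matrix over $\FF$ is diagonal iff $A_{21}=0$ (which then forces $A_{12}=0$) and is anti-diagonal iff $A_{11}=0$ (which then forces $A_{22}=0$); in each case, the remaining entry on the (anti-)diagonal has modulus $1$ and hence is invertible, which allows one to cancel it in the orthogonality relation between the two columns. Thus monomiality of $A$ amounts to the vanishing of one of $A_{11}$ or $A_{21}$.

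Finally, I would carry out the multiplication $U^*gU$ for $g=\pmat{a&b\cr c&d}$; a direct expansion yields
$$(1+|q|^2)A_{11}=a+bq+\overline{q}c+\overline{q}dq,\qquad (1+|q|^2)A_{21}=qa+qbq-c-dq,$$
which are precisely the two equations displayed in the lemma. The only genuine difficulty is bookkeeping in the quaternionic case, where multiplication does not commute: the order of factors in $\overline{q}dq$, $qbq$, and in the identities from $A^*A=I$ used in the previous step must be tracked carefully throughout. Everything else is a routine $2\times 2$ computation using $U^*=U$.
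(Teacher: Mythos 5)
Your proposal is correct and follows essentially the same route as the paper: conjugate each generator by the Hermitian unitary $U$ of (\ref{Uqdefn}), observe that monomiality of the unitary matrix $U^*gU$ is equivalent to the vanishing of one entry of its first column (unitarity forcing the corresponding second-column entry to vanish), and read off the two displayed equations from the explicit product. The extra detail you supply on extracting the monomial criterion from $A^*A=I$ is a harmless elaboration of what the paper asserts in one line.
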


\begin{proof}
Let $U$ be the change of basis matrix (\ref{Uqdefn}) for the orthogonal
system given by $(1,q)$. Then the matrix representation of $g$ in this basis is
\begin{align*}
U^{-1}gU &={1\over1+|q|^2}
\pmat{1&\overline{q}\cr q&-1} \pmat{a&b\cr c& d} \pmat{1&\overline{q}\cr q&-1} \cr
&= {1\over1+|q|^2} \pmat{a +bq +\overline{q}c +\overline{q}dq &
a\overline{q} -b +\overline{q}c\overline{q} -\overline{q}d \cr 
qa +qbq -c -dq &
	qa\overline{q} -qb -c\overline{q} +d\overline{q} },
\end{align*}
which is a monomial matrix if and only if one of the entries of the first column 
is zero (this implies the same for the second column since $U^{-1}gU$ 
is unitary), i.e., one of the equations in 
	(\ref{abcdqeqns}) holds.
\end{proof}

We observe that for $(1,0)$ to give a system of imprimitity (the standard basis), 
the condition (\ref{abcdqeqns}) for $q=0$ reduces to $a=0$ or $c=0$, i.e., 
that $g$ is monomial.

\section{The systems of imprimitivity of the real reflection groups}

To illustrate the calculations and results to come 
(for the complex and quaternionic reflection groups), 
we now use Lemma \ref{abcdq-lemma} to find the systems of imprimitivity for the
irreducible real reflection groups of rank two.
These are the {\bf dihedral groups}
\begin{equation}
\label{Dihedraldefn}
D_n = \inpro{R,S}, \quad n\ge3,  \qquad 
R=\pmat{\cos{2\pi\over n}&-\sin{2\pi\over n}\cr\sin{2\pi\over n}&\cos{2\pi\over n}}, 
\quad S=\pmat{1&0\cr0&-1},
\end{equation}
generated by a rotation $R$ by ${2\pi\over n}$ and a reflection $S$ in the $x$-axis,
which is the symmetry group of the regular $n$-gon.
Writing $c_n=\cos{2\pi\over n}$, $s_n=\sin{2\pi\over n}$, the conditions
of (\ref{abcdqeqns}) for $R$ and $S$ are
$$ c_n -s_n q +\overline{q}s_n +\overline{q}c_nq=0, \qquad 
qc_n -qs_n q -s_n -c_n q=0, $$
i.e.,
\begin{equation}
\label{DihedeqnI}
c_n(1+|q|^2)+s_n(\overline{q}-q) =0, \qquad s_n(q^2+1) =0, 
\end{equation}
and 
\begin{equation}
\label{DihedeqnII}
1 -\overline{q}q=1-|q|^2=0, \qquad q +q=2q=0.
\end{equation}

Taking $q=0$ to satisfy the second equation in (\ref{DihedeqnII}) reduces
(\ref{DihedeqnI}) to
$$ c_n =0, \qquad s_n=0, $$
one of which can hold only for $n=4$ ($c_4=0$).
Thus only $D_4$ has the standard basis as a system of imprimitivity
(it is a monomial group, as is seen by inspection).


Taking $|q|=1$, $q=a+bi+cj+dk\in\HH$, to satisfy the first equation in (\ref{DihedeqnII}), 
reduces (\ref{DihedeqnI}) to
$$ 2c_n-2s_n(bi+cj+dk) =0, \qquad s_n(q^2+1) =0 \Iff q^2=-1. $$
Since $s_n\ne0$, every dihedral group has a system of imprimitivity
given by $(1,q)$, where $q^2=-1$ (this is equivalent to $\Re(q)=0$ and $|q|=1$).
In particular $(1,i)$ gives a complex system of primitivity.
Since $c_n\ne0$, $n\ne4$, and $c_4=0$, there is second system of primitivity for
$D_4$ given by $(1,1)$. 

Thus we have found all the systems of primitivity for the real reflection groups.

\begin{theorem}
\label{realreflectsystems}
The systems of imprimitivity for the irreducible real
	reflection groups $D_n$, $n\ge3$, are given by $(1,q)$, $q\in\HH$, 
	in the following cases
\begin{enumerate}[(a)]
\item $(1,0)$, $(1,1)$ for $D_4$.
\item $(1,i)$ for $D_n$, $n\ge3$.
\item $(1,q)$, $|q|=1$, $q\in\HH\setminus\CC$, $\Re(q)=0$, for $D_n$, $n\ge3$.
\end{enumerate}
with the possible double countings described in Proposition \ref{doublecountinprop}.
\end{theorem}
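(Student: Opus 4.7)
The plan is to apply Lemma \ref{abcdq-lemma} to the generators $R$ and $S$ of $D_n$ given in (\ref{Dihedraldefn}). This yields the pairs of equations (\ref{DihedeqnI}) (from $R$) and (\ref{DihedeqnII}) (from $S$), and $(1,q)$ gives a system of imprimitivity precisely when one equation from each pair holds. The strategy is then a $2\times 2$ case analysis, exactly along the lines sketched in the paragraphs preceding the theorem.

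I would begin by branching on the equation supplied by $S$. If $2q=0$, i.e.\ $q=0$, then (\ref{DihedeqnI}) collapses to $c_n=0$ or $s_n=0$; since $s_n\ne 0$ for $n\ge 3$, only $c_n=0$ survives, forcing $n=4$ and producing the candidate $(1,0)$. Otherwise $|q|=1$, and writing $q=a+bi+cj+dk$ one has $\overline{q}-q=-2(bi+cj+dk)$, a pure imaginary quaternion. The first equation of (\ref{DihedeqnI}) then reads $2c_n-2s_n(bi+cj+dk)=0$; matching real and purely imaginary parts (using $c_n\in\RR$ and $s_n\ne 0$) forces $c_n=0$, hence $n=4$, together with $bi+cj+dk=0$, i.e.\ $q=\pm 1$. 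The second equation of (\ref{DihedeqnI}) becomes $q^2=-1$, equivalent to $\Re(q)=0$ with $|q|=1$, and is available for every $n\ge 3$.

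Assembling the four sub-cases yields exactly the three families in the statement: $q=0$ and $q=\pm 1$ for $D_4$ (the pair $\pm 1$ collapsing to the single system $(1,1)$ by Proposition \ref{doublecountinprop}), and $q$ with $q^2=-1$ for all $n\ge 3$, split into the complex solutions $q=\pm i$ (which likewise collapse to $(1,i)$, giving (b)) and the strictly quaternionic solutions in (c). The only step requiring real care is the real/imaginary separation for the first equation of (\ref{DihedeqnI}) under $|q|=1$; this rigidity is what rules out additional solutions at $n\ne 4$ and produces the clean three-family classification.
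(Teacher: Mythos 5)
Your proposal is correct and follows essentially the same route as the paper: apply Lemma \ref{abcdq-lemma} to $R$ and $S$ to get (\ref{DihedeqnI})--(\ref{DihedeqnII}), branch on the $S$-equations into $q=0$ versus $|q|=1$, and in the latter case separate the real and pure-imaginary parts of $2c_n - 2s_n(bi+cj+dk)=0$ to isolate $n=4$, $q=\pm1$ from the generic solution $q^2=-1$. The bookkeeping of the double counting via Proposition \ref{doublecountinprop} also matches the paper.
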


\vskip-0.55truecm

\begin{table}[h]
\caption{The systems of imprimitivity for the real reflection groups $D_n$, $n\ge3$, of 
	(\ref{Dihedraldefn}). The condition $|q|=1$, $\Re(q)=0$ is equivalent to $q^2=-1$.
	In particular, we observe that $D_4$ is imprimitive, with two systems of
	imprimitivity, and the other groups are primitive.}
\label{RealSystems-table}
\begin{center}
\vskip-0.25truecm
\begin{tabular}{ |  >{$}l<{$} | >{$}c<{$} | >{$}l<{$} | >{$}l<{$} | l | }
\hline
&&&&\\[-0.3cm]
	G & \hbox{real} & \hbox{complex} & \hbox{quaternionic} & \hbox{comment} \\[0.1cm]
\hline
&&&&\\[-0.3cm]
	D_n,\, n\ne4 & & (1,i) & (1,q),\, |q|=1,\, \Re(q)=0 & primitive, $\cong_\CC G(n,n,2)$ \\ [3pt]
	D_4 & (1,0),\ (1,1) & (1,i) & (1,q),\, |q|=1,\, \Re(q)=0 & imprimitive, $\cong_\CC G(4,4,2)$ \\[3pt]
\hline
\end{tabular}
\end{center}
\end{table}

Since $(1,i)$ gives a (complex) system of imprimitivity for all the real reflection 
groups $D_n$ of (\ref{Dihedraldefn}), 
they may be conjugated by the $U$ of (\ref{Uqdefn}) for $q=i$ to obtain 
a complex imprimitive reflection group. With $\go=e^{2\pi i\over n}$, 
the generators for this group 
are
\begin{align}
\label{dihedralUbasis}
U^{-1}RU &={1\over2}\pmat{1&-i\cr-i&1}\pmat{c_n&-s_n\cr s_n&c_n}
\pmat{1&i\cr i&1} 
=\pmat{c_n-is_n&0\cr0&c_n+is_n} = \pmat{\overline{\go}&0\cr0&\go}, \cr
	U^{-1}SU &={1\over2} \pmat{1&-i\cr-i&1} \pmat{1&0\cr0&-1} \pmat{1&i\cr i&1} 
= \pmat{0&-i\cr i&0}, 
\end{align}
which is the imprimitive ``dihedral'' group $G(n,n,2)$ in the 
Shephard-Todd classification of complex reflection groups 
(see Example \ref{leftrightmult}). 
The Shephard-Todd group $G(2,1,2)$ is precisely our $D_4$, which explains
the isomorphism
$$ G(2,1,2) \cong_\CC G(4,4,2), $$
which is the only isomorphism between the Shephard-Todd groups $G(n,p,d)$ for $d\ge2$
fixed, where $d$ is the rank of the group.

We now seek to do essentially the same calculations for the complex and quaternionic
reflection groups of rank two. Many of these are imprimitive, i.e., already in 
monomial form, and so in this case we are looking for {\it additional} systems of imprimitivity.

The classification of the imprimitive complex and quaternionic reflection groups of rank two
(and the real one for that matter) proceeds from the observation that the monomial
reflections have two types
\begin{equation}
\label{type1type2}
\pmat{0&b\cr \overline{b}&0}, \qquad \pmat{h&0\cr0&1}, \ \pmat{1&0\cr0&h}, \quad h\ne1, 
\end{equation}
which have orders $2$ and the order of $h$, respectively. The roots for these
reflections 
are $(1,-\overline{b})$, $e_1$, $e_2$, respectively.

The set $L$ of the $b$'s giving the reflections of the first type (for a given 
imprimitive
reflection group) are closed under the binary operation
$$ (a,b)\mapsto a\circ b:=ab^{-1}a, $$
and form what is called a {\bf reflection system} in \cite{W25}.
If the closure of a subset $\cL\subset L$ under the operation $\circ$ is $L$,
then 
we say that $\cL$ {\bf generates} the reflection system $L$.
If $1\in L$, then the multiplicative group $K$ generated by the reflection system $L$ is a
finite subgroup of $\HH^*$. 
We will enumerate the possible $K$ (all of which
give imprimitive reflection groups) as we go through the classification.
The group $K=\inpro{-1}\subset\RR$ gives the real reflection group $D_4$, 
$K=\inpro{\go}\subset\CC$, $\go=e^{2\pi i\over n}$, $n\ge3$, gives the 
complex reflection groups, and the binary tetrahedral, octahedral, icosahedral
and dihedral groups $K=\cT,\cO,\cI$ and $\cD_n$, $n\ge2$, the quaternionic ones
(see Table \ref{TOIDngroups-table}).
The set $H$ of the $h$'s giving the reflections of the second type together with $1$
is a normal subgroup of $K$. 

The imprimitive reflection group $G(K,L,H)$ is
defined to be the reflection group generated by the reflections of the types
(\ref{type1type2}) given by $b\in L$, $h\in H\setminus\{1\}$, as above.
Canonical choices of $(K,L,H)$ which give {\em all} the reflection groups {\em without
repetition} (groups being conjugate) are given in \cite{W25}
(see Table \ref{TOIDngroups-table} below).
A small set of generating reflections for these groups corresponding 
to subsets $\cL\subset L$ and $\cH\subset H$ are given, and we have
\begin{equation}
\label{G(K,L,H)defn}
G(K,L,H)=\cG(\cL,\cH):=\inpro{\bigl\{\pmat{0&\gb\cr\overline{\gb}&0}\bigr\}_{\gb\in\cL}\bigcup
\bigl\{\pmat{h&0\cr0&1}\bigr\}_{h\in\cH} }.
\end{equation}
It is enough to simply use the generators given in
Table \ref{TOIDngroups-table}. 
We observe that for these
\begin{enumerate}
\item $\cL$ is a generating set for the reflection system $L$ (which can be
	labelled to indicate $K$ and its number of elements). 
\item $\cL$ always contains $1$.
\item $\cH$ need not contain $1$, and $\cH=\{\}$ gives what is called the {\em base group},
	and larger sets $\cH$ give the {\em higher order groups} for the given 
		reflection system $L$.
\end{enumerate}
One advantage of (\ref{G(K,L,H)defn}) over the classification of \cite{C80}
for quaternionic reflection groups is that the groups are given explicitly 
with a small number of generating reflections. Another is that inclusions of the form
\begin{equation}
\label{GKLHsinclusion}
G(K_1,L_1,H_1)\subset G(K_2,L_2,H_2), \qquad
K_1\subset K_2,\ L_1\subset L_2,\ H_1\subset H_2,
\end{equation}
are readily apparent. It should also be noted that the classification of \cite{C80}
is not correct, it both over counts and under counts reflection groups
(see \cite{W25}, \cite{T25}).

\begin{table}[!h]
\caption{The imprimitive reflection groups $G=G_K(L,H)=\cG(\cL,\cH)$ obtained from the
reflection systems $L$ for $K=\cD_n$ ($n\ge2$, $[n,a,b,r]\in\gL_n$) and $K=\cT,\cO,\cI$
of \cite{W25}.
The base groups have $H=1$, 
and the $\cL$ given for the base group generates  
	$L$.
The only conjugate groups are $G_\cT(L_{12}^\cT,C_2)\cong_\HH G_\cO(L_{14}^\cO,1)$.
The number of reflections in $G$ is $|L|+2(|H|-1)$, e.g.,
$G(n,a,b,r)=G_{\cD_n}(L_{(a,b)}^{(n)},C_r)$, 
has
${2n\over a}+{2n\over b}+2(r-1)$ reflections.
	}
	\vskip0.3truecm
\label{TOIDngroups-table}
\begin{center}
\begin{tabular}{ |  >{$}l<{$} | >{$}l<{$} | >{$}l<{$} | >{$}l<{$} | >{$}l<{$} | >{$}l<{$} | >{$}l<{$} | >{$}l<{$} |}
\hline
	&&&&&\\[-0.3cm]
K & L & H & |G| & \cL & \cH \\[0.1cm]
\hline
&&&&&\\[-0.3cm]
\cD_n & \cD_n & \cD_n & 32n^2 & \{1,\go,j,\go j\} & \{ \go,j 	\} \\
\cD_n & \cD_n & \cD_{n/2} & 16n^2 & \{1,\go,j,\go j\} & \{ j 	\} \\
	\cD_n & L_{(a,b)}^{(n)} & C_r & 8nr & \{1,\go^a,j,\go^b j\}\, ([n,a,b,r]\in\gL_n^*) & \{ \go^{2n\over r} 	\} \\
&&&&&\\[-0.3cm]
\cT & \cT & \cT & 1152 & \{1,i\} & \{{1+i+j+k\over2}\} \\
\cT & \cT & Q_8 & 384 & \{1,i,j,{1+i+j+k\over2}\} & \{\} \\
\cT & L_{12}^\cT & C_2 & 96 & \{1,i,{1+i+j+k\over2}\} & \{-1\} \\
\cT & L_{12}^\cT & 1 & 48 & \{1,i,{1+i+j+k\over2}\} & \{\} \\
&&&&&\\[-0.3cm]
\cO & \cO & \cO & 4608 & \{1,{1+i+j+k\over2} \} & \{{1+i\over\sqrt{2}}\} \\
\cO & \cO & \cT & 2304 & \{1, {1+i\over\sqrt{2}},{1+j\over\sqrt{2}}, {1+i+j+k\over2} \} & \{\} \\
\cO & L_{32}^\cO & Q_8 & 768 & \{1,{1+i\over\sqrt{2}},j,{1+i+j+k\over2}\} & \{\} \\
\cO & L_{20}^\cO & C_2 & 192 & \{1,{1+i\over\sqrt{2}},{1+i+j+k\over2},{j-k\over\sqrt{2}}\} & \{\} \\
\cO & L_{18}^\cO & 1 & 96 & \{1,{1+i\over\sqrt{2}},{1+i+j+k\over2}\} & \{\} \\
\cO & L_{14}^\cO & 1 & 96 & \{1,i,{1+i+j+k\over2},{j-k\over\sqrt{2}}\} & \{\} \\
&&&&&\\[-0.3cm]
\cI & \cI & \cI & 28800 & \{1,i,{1+i+j+k\over2},{\tau+\gs i-j\over2}\} & \{ \} \\
\cI & L_{32}^\cI & C_2 & 480 & \{1,{1+i+j+k\over2},{\tau+\gs i-j\over2},{j-\tau i-\gs k\over2}\} & \{ \} \\
\cI & L_{30}^\cI & 1 & 240 & \{1,{1+i+j+k\over2},{\tau+\gs i-j\over2}\} & \{ \} \\
\cI & L_{20}^\cI & C_2 & 480 & \{1,i,{1+i+j+k\over2},{i+\gs j+\tau k\over2}\} & \{-1 \} \\
\cI & L_{20}^\cI & 1 & 240 & \{1,i,{1+i+j+k\over2},{i+\gs j+\tau k\over2}\} 
& \{ \} \\ [0.1cm]
\hline
\multicolumn{6}{l}{\hbox{\hskip5truecm\footnotesize $\go=e^{\pi i\over n}$, $\tau={1+\sqrt{5}\over2}$, $\gs=1-\tau$ }} 
\end{tabular}
\end{center}
\end{table}

\begin{example}
\label{leftrightmult}
Left or right multiplication of a reflection system by some unit scalar $x$ gives
another reflection system, e.g.,
	$$ (xa)\circ(xb) = xa(xb)^{-1}xa
	=xab^{-1}x^{-1}xa=-xab^{-1}a=x(a\circ b), $$
which is considered to be equivalent, since
\begin{equation}
\label{RefSys-equiv}
\pmat{x&0\cr 0&1}^{-1}M_{xb}\pmat{x&0\cr 0&1}
=\pmat{1&0\cr 0&x}M_{bx}\pmat{1&0\cr 0&x}^{-1}=M_b,
\qquad M_b:=\pmat{0&b\cr b^{-1}&0}.
\end{equation}
The reflections $S,SR,\ldots,SR^{n-1}$ of the dihedral group of (\ref{Dihedraldefn})
in the basis of (\ref{dihedralUbasis}) are
$$ U^{-1} SR^j U 
= \pmat{0&-i\cr i&0} \pmat{\overline{\go}&0\cr0&\go}^j
= \pmat{0&-i{\go}^j\cr(-i{\go}^j)^{-1}&0}, $$
giving the reflection system $\{-i,-i\go,\ldots,-i\go^{n-1}\}$, 
which is equivalent to $\{1,\go,\ldots,\go^{n-1}\}$ 
	(which has generating set $\{1,\go\}$), so 
	by (\ref{Gnp2defn}), we have that $D_n\cong_\CC G(n,n,2)$.
This can also be obtained directly by appropriately scaling the orthonormal basis, 
i.e.,
\begin{equation}
\label{Uscale}
V^{-1}SV=\pmat{0&1\cr1&0}, \quad V^{-1} SR V=\pmat{0&\go\cr\overline{\go}&0},
\qquad
V=U\pmat{-i&0\cr0&1}={1\over\sqrt{2}}\pmat{-i&-i\cr1&-1}.
\end{equation}
\end{example}

Since the generating reflections in (\ref{G(K,L,H)defn}) are 
monomial matrices, 
the Lemma \ref{abcdq-lemma} takes the following simplified form.

\begin{lemma}
\label{imprimitivitylemma}
The vector $(1,q)\in\HH^2$, $q\ne0$, gives 
     an additional system of
imprimitivity for the imprimitive reflection group $G=G(K,L,H) =\cG(\cL,\cH)$
if and only if
\begin{enumerate}[(i)]
	\item $\Re(\gb q)=0$ or $q=\pm\overline{\gb}$, for all $\gb\in\cL$. (base group)
	\item $h=-|q|^2$, for all $h\in\cH$, $h\ne1$. (higher order groups)
\end{enumerate}
In particular, $G$ can have additional systems of imprimitivity
only when all its reflections have order two.
\end{lemma}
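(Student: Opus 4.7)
The plan is to specialize Lemma \ref{abcdq-lemma} to each of the two types of monomial generating reflections listed in (\ref{type1type2}), since these are exactly the generators in (\ref{G(K,L,H)defn}). For $q\ne0$ the two alternatives of (\ref{abcdqeqns}) should simplify dramatically, because each generator has two zero entries.

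For a first-type generator $g=\pmat{0 & \beta\cr \overline{\beta} & 0}$ with $\beta\in\cL$ (so $|\beta|=1$ since $g^2=I$), substituting $(a,b,c,d)=(0,\beta,\overline{\beta},0)$ into (\ref{abcdqeqns}) reduces the two equations to
$$\beta q + \overline{q}\,\overline{\beta}=2\Re(\beta q)=0, \qquad q\beta q=\overline{\beta}.$$
Multiplying the second by $\beta$ on the left turns it into $(\beta q)^2=1$, and the only square roots of $1$ in $\HH$ are $\pm 1$ (writing $x=a+v$ with $v$ pure imaginary and solving $x^2=1$ forces $v=0$), so $\beta q=\pm 1$, i.e., $q=\pm\overline{\beta}$. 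The disjunction of the two alternatives is condition (i).

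For a second-type generator $g=\pmat{h & 0\cr 0 & 1}$ with $h\in\cH\setminus\{1\}$, substituting $(a,b,c,d)=(h,0,0,1)$ into (\ref{abcdqeqns}) collapses the two equations to $h+|q|^2=0$ and $q(h-1)=0$. The second cannot hold when $q\ne0$ and $h\ne1$, so only the first is available, giving condition (ii).

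The ``in particular'' statement then falls out: if an additional system $(1,q)$ exists, then by Observation 1 it is also a system for every individual reflection of $G$, so (ii) applies to every $h$ occurring in a type-$2$ reflection of $G$. This forces such $h$ to equal the real number $-|q|^2$, and combined with $|h|=1$ (unitarity) this yields $|q|=1$ and $h=-1$. Hence every type-$2$ reflection in $G$ has order two, while the type-$1$ reflections do so by construction. The only non-routine ingredient is the classification of square roots of $1$ in $\HH$; everything else is a direct substitution into (\ref{abcdqeqns}).
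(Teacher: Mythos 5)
Your proposal is correct and follows essentially the same route as the paper: substitute the two monomial generator types of (\ref{type1type2}) into (\ref{abcdqeqns}), reduce the type-$1$ alternatives to $\Re(\gb q)=0$ or $(\gb q)^2=1$ (hence $q=\pm\overline{\gb}$), reduce the type-$2$ alternatives to $h=-|q|^2$ or $h=1$, and deduce the order-two consequence from $h=-|q|^2$ with $|h|=1$. Your explicit note that $\pm1$ are the only square roots of $1$ in $\HH$ is a detail the paper leaves implicit, but nothing of substance differs.
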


\begin{proof}
For the generators
$$ g=\pmat{0&\gb\cr\overline{\gb}&0}, \quad\gb\in\cL, \qquad
	g=\pmat{h&0\cr0&1},\quad j\in\cH, $$
the condition (\ref{abcdqeqns}) in Lemma \ref{abcdq-lemma} reduces to
$$ \gb q +\overline{q}\overline{\gb} =0 \Iff \Re(\gb q)=0, 
\qquad -q\gb q +\overline{\gb} =0 \Iff (\gb q)^2=1 \Iff q=\pm\overline{\gb}, $$
$$ h +\overline{q}q=0 \Iff h=-|q|^2, \qquad -qh +q=0 \Iff h=1, $$
	respectively, and we obtain the conditions (i) and (ii).

If $G$ had a reflection of order $m\ge3$, then it would be given by some
	$h$ of order $m$, and (ii) would be false.
\end{proof}

We observe that for a given $\gb=\gb_1+\gb_2 i+\gb_3 j+\gb_4 k\in\cL$,
the first condition of (i) gives a homogeneous
linear equation in the coefficients of $q=a+bi+cj+dk$, i.e.,
\begin{equation}
\label{abcdeqn}
\Re(\gb q)=\gb_1 a-\gb_2 b-\gb_3 c-\gb_4 d=0.
\end{equation}
Further, if $\gb q\in\CC$, then we have the equivalences
\begin{equation}
\label{realgbqcdn}
\Re(\gb q)=0 \Iff \gb q=\pm|q|i \Iff (\gb q)^2=-|q|^2.
\end{equation}

We observe that only case where there could be a $q$ with $|q|\ne1$, 
and hence a continuous infinite family of systems of imprimitivity is when
$$ q\not\in\{\pm\overline{\gb}:\gb\in\cL\}, \qquad H=1. $$

\section{The complex reflection groups and their systems of imprimitivity }

We now consider the 
complex reflection groups (see \cite{BMR95}, \cite{LT09}).


Let $\go=e^{2\pi i\over n}$.
The (irreducible) imprimitive complex reflection groups of rank two are given by 
the cyclic groups $K=C_n=\inpro{\go}$. They are
\begin{equation}
\label{Gnp2defn}
G(n,p,2) =\cG(\{1,\go\},\{\go^p\}), \quad p\divides n, \qquad n\ge3, \quad
	(n,p,2)\ne(4,4,2).
\end{equation}
We exclude $n=2$, which gives the real reflection groups
$G(2,1,2)=D_4$ (as already discussed) and 
$G(2,2,2)$ (which is not irreducible).
We will also see that $G(4,4,2)$ is conjugate to the real imprimitive reflection 
group $G(2,1,2)=D_4$.

\begin{theorem}
\label{imprimiitivecomplexsys}
The imprimitive complex reflection groups $G(n,p,2)$, $p\divides n$, $n\ge3$,
as defined by (\ref{Gnp2defn}, have in addition to the standard basis,
systems of imprimitivity given by $(1,q)$, $q\in\HH$, $q\ne0$, in
the following cases
\begin{enumerate}[(a)]
\item $(1,1)$, $(1,i)$ for $G(4,4,2)$ and $G(4,2,2)$. (the last group contains the first).
\item $(1,zj)$, $z\in\CC$ ($z\ne0$), for $G(n,n,2)$.
\item $(1,zj)$, $z\in\CC$, $|z|=1$, for $G(n,{n\over2},2)$, when $n$ is even.
\end{enumerate}
The group $G(4,4,2)$ is conjugate to the real reflection group $D_4=G(2,1,2)$, 
and we note
the inclusion 
	$$ \hbox{$G(n,n,2)\subset G(n,{n\over2},2).$} $$
\end{theorem}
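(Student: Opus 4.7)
The plan is to apply Lemma~\ref{imprimitivitylemma} with $\cL=\{1,\go\}$ and $\cH=\{\go^p\}$ to determine all $q\in\HH$, $q\ne0$, giving an additional system of imprimitivity. Condition (i) becomes two disjunctions, one for each generator:
\begin{equation*}
\Re(q)=0 \hbox{ or } q=\pm1, \qquad \Re(\go q)=0 \hbox{ or } q=\pm\overline{\go}.
\end{equation*}
When $p<n$, condition (ii) becomes $\go^p=-|q|^2$; since $\go^p$ lies on the unit circle and $-|q|^2\le0$, this forces $\go^p=-1$ and $|q|=1$, i.e.\ $p=n/2$ with $n$ even. When $p=n$, condition (ii) is vacuous and $|q|$ is unrestricted. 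So there are just two regimes to handle.

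The one substantive computation is the real part of $\go q$: writing $q=a+bi+cj+dk$ with $a,b,c,d\in\RR$ and $\theta=2\pi/n$, direct expansion of $(\cos\theta+i\sin\theta)q$ using $ij=k$, $ik=-j$ gives
\begin{equation*}
\Re(\go q)=a\cos\theta-b\sin\theta.
\end{equation*}
The four Boolean combinations of the disjunctions in (i) are then straightforward: (A) $\Re(q)=0$ together with $\Re(\go q)=0$ forces $a=0$ and (since $\sin\theta\ne0$ for $n\ge3$) $b=0$, giving $q=cj+dk=(c+di)j=zj$ with $z\in\CC\setminus\{0\}$; (B) $\Re(q)=0$ with $q=\pm\overline{\go}$ forces $\cos\theta=0$, i.e.\ $n=4$ and $q=\pm i$; (C) $q=\pm1$ with $\Re(\go q)=0$ again forces $\cos\theta=0$, i.e.\ $n=4$ and $q=\pm1$; (D) $q=\pm1=\pm\overline{\go}$ forces $\go=\pm1$ and so $n\le2$, which is excluded.

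To finish I would assemble the cases. For $G(n,n,2)$, (A) yields the infinite family $(1,zj)$, $z\in\CC\setminus\{0\}$, and for $n=4$ the cases (B), (C) contribute $(1,i)$ and $(1,1)$ (the $\pm$ being identified by Proposition~\ref{doublecountinprop}, since $|q|=1$). For $G(n,n/2,2)$ with $n$ even, the extra constraint $|q|=1$ restricts (A) to $|z|=1$, and again (B), (C) contribute $(1,i)$ and $(1,1)$ at $n=4$. The inclusion $G(n,n,2)\subset G(n,n/2,2)$ is immediate from (\ref{G(K,L,H)defn}), since the former has $\cH$ effectively empty while the latter adds the generator with $\go^{n/2}=-1$; and the conjugacy $G(4,4,2)\cong_\CC D_4=G(2,1,2)$ is just the $n=4$ instance of the conjugacy $D_n\cong_\CC G(n,n,2)$ already exhibited at the end of Example~\ref{leftrightmult}. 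The only real obstacle is the bookkeeping of the four-case disjunction and the observation that the mixed cases (B), (C) contribute only at $n=4$; everything else is an immediate consequence of the real-part formula above.
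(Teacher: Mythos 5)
Your proposal is correct and follows essentially the same route as the paper: apply Lemma~\ref{imprimitivitylemma} with $\cL=\{1,\go\}$, $\cH=\{\go^p\}$, reduce (ii) to $p=n$ or $p=n/2$ with $|q|=1$, and run through the disjunctions in (i) using $\Re(\go q)=a\cos\theta-b\sin\theta$, with your four-case Boolean bookkeeping matching the paper's case analysis. The only cosmetic difference is that you obtain the conjugacy $G(4,4,2)\cong_\CC D_4$ from Example~\ref{leftrightmult} rather than from the paper's dedicated Example~\ref{G442conjexample}; both are valid.
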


\begin{proof}
We apply Lemma \ref{imprimitivitylemma} to the generating reflections of (\ref{Gnp2defn})
for $G(n,p,2)$, i.e.,
$$  \cL=\{1,\go\}, \qquad \cH=\{\go^p\}.  $$

The condition (ii) for $\cH=\{\go^p\}$   
holds if and only if $h=\go^p=1$, i.e., $p=n$, or $h=\go^p=-|q|^2$, i.e., $p={n\over2}$
and $|h|=1$.

The conditions of (i) for $\gb\in\cL=\{1,\go\}$ are
$$ \Re(q)=0, \qquad q=\pm1, $$
$$ \Re(\go q)=0, \qquad q= \pm \overline{\go}. $$
If we take $q=\pm1$, then we must have $\Re(\go)=0$, 
i.e., $n=4$, and there is a system of imprimitivity given by $(1,1)$ 
for $G(4,4,2)$ and $G(4,2,2)$.
We now seek a system with $q=\pm\overline{\go}\ne\pm1$, since $\Re(q)=0$, 
we must have $n=4$, and we obtain $(1,i)$ for $G(4,4,2)$, $G(4,2,2)$.
Finally, we seek a system with 
$$ \Re(q)=0, \qquad \Re(\go q)=0, $$
i.e., by (\ref{abcdeqn}), a quaternion $q=a+bi+cj+dk$ with
$$ a=0, \qquad \bigl(\cos{2\pi\over n}\bigr)a-\bigl(\sin{2\pi\over n}\bigr)b=0
\Implies a=b=0 \quad\hbox{(since $n\ge3$}). $$
The $q$ obtained in this way can be written
$q=cj+dk=zj$, $z\in\CC$.

The conjugacy $G(4,4,2)\cong_\CC G(2,1,2)$ is given in the Example \ref{G442conjexample} 
to follow, and the inclusion follows immediately from (\ref{GKLHsinclusion}).
\end{proof}

\begin{example} 
\label{G442conjexample}
For $n=4$, we consider $G(4,p,2)=G(4,4,2),G(4,2,2)$.
Since $\go=i$, these groups are generated by the reflections
$$ \pmat{0&1\cr1&0},\pmat{0&i\cr-i&0}, \quad \pmat{-1&0\cr0&1}\ 
\hbox{(for $p=2$)}. $$
In the system of imprimitivity given by $\{(1,1),(-1,1)\}$ these are
$$ \pmat{1&0\cr0&-1}, \pmat{0&i\cr-i&0}, \quad \pmat{0&1\cr1&0}, $$
and in the system $\{(1,i),(i,1)\}$ they are
$$ \pmat{0&1\cr1&0},\pmat{-1&0\cr0&1}, \quad \pmat{0&-i\cr i&0}. $$
From the last, we conclude that $G(4,4,2)$ is conjugate to $G(2,1,2)=D_4$. 
\end{example}

\begin{example} 
We consider the mechanics of the system of imprimitivity given by
\begin{equation}
\label{zjmechanics}
U={1\over\sqrt{1+|z|^2}}\pmat{1&zj\cr zj&1}, \quad z\in\CC,  \quad
U^{-1}=U^*={1\over\sqrt{1+|z|^2}}\pmat{1&-zj\cr -zj&1}.
\end{equation}
In this system, the generators 
$$ g= \pmat{0&1\cr1&0},\pmat{0&\go\cr\overline{\go}&0}, \quad
\pmat{\go^p&0\cr0&1}, $$
for $G(n,p,2)$ become
	$$ U^{-1}gU= \pmat{0&1\cr1&0},\pmat{0&\go\cr\overline{\go}&0}, \quad
{1\over1+|z|^2}\pmat{\go^p+|z|^2&(\go^p-1)zj\cr
(1-\overline{\go}^p)zj&\overline{\go}^p|z|^2+1}, $$
with the last clearly monomial if $\go^p=1$ or $|z|=1$, $\go^p=-1$.
Moreover, we have
\begin{equation} 
\label{j-iso}
U^{-1}\pmat{0&j\cr-j&0}U 
= {1\over{1+|z|^2}}\pmat{-z-\overline{z} &(1-z^2)j\cr (z^2-1)j &z+\overline{z}} 
= {1\over{1+|z|^2}}\pmat{-2\Re(z) &(1-z^2)j\cr (z^2-1)j &2\Re(z)},
\end{equation}
which is monomial and real if and only if $z=\pm1$.
\end{example}

The $19$ (irreducible) primitive complex reflection groups
of rank two are denoted by
$$ G_4,\ldots,G_7 \ \hbox{(tetrahedral)}, \quad
 G_8,\ldots,G_{15} \ \hbox{(octahedral)}, \quad
 G_{16},\ldots,G_{22} \ \hbox{(icosahedral)}. $$
Will use the explicit unitary generators of \cite{W26} for these groups, which 
satisfy certain inclusions (see Figure \ref{GjInclusions}).
Only $G_{12}$, $G_{13}$ and $G_{22}$ will turn out to have quaternionic systems of imprimitivity,
and these have the following generators.
Let
$$ F:= {1\over\sqrt{2}}\pmat{1&1\cr1&-1}, \quad
R:= \pmat{1&0\cr0&-i}, \quad 
Z= e^{2\pi i\over24} RF =\hbox{${1+\sqrt{3}+(\sqrt{3}-1)i\over4}$}\pmat{1&1\cr-i& i}, $$
$$ A:=R^2=\pmat{1&0\cr0&-1}, \quad
M:=\hbox{$(-{\tau\over4}+{\sqrt{1-\tau^2/4}\over2} i)$}
\pmat{-\tau+i&\gs\cr -\gs&-\tau-i}, $$
with $\tau={1+\sqrt{5}\over2}$, $\gs=1-\tau={1-\sqrt{5}\over2}$.
Then
\begin{equation}
\label{G12G13G22defn}
G_{12} = \inpro{F,F^Z,F^{Z^2}}, \qquad
G_{13} = \inpro{F,F^Z,R^2}, \qquad
G_{22} = \inpro{A,A^Z,A^M}, 
\end{equation}
where
$$ 
F^Z=ZFZ^{-1}= {1\over\sqrt{2}}\pmat{1&i\cr-i&-1}, \qquad
F^{Z^2}=   {1\over\sqrt{2}}\pmat{0&1+i\cr1-i&0}, $$
$$ A^Z=\pmat{0&i\cr-i&0}, \quad
A^M=\pmat{{\tau\over2}&-{1\over2}-{\gs\over2}i\cr -{1\over2}+{\gs\over2}i & -{\tau\over2}}.
 $$

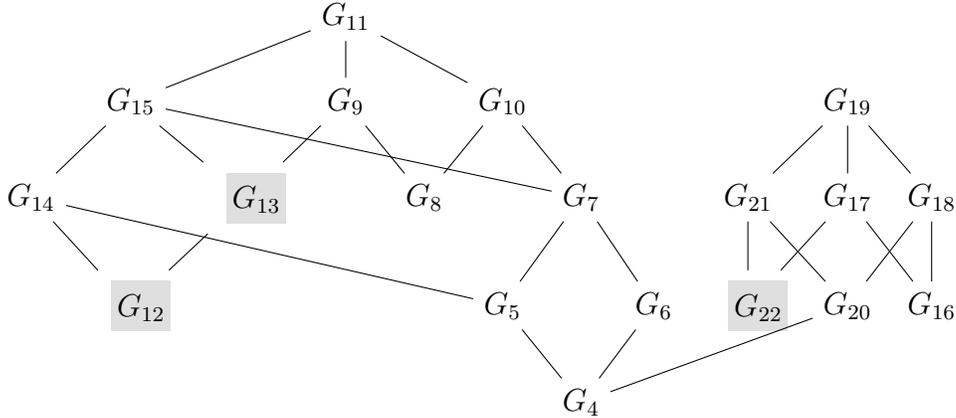
\begin{figure}[!h]
\begin{center}
\caption{The inclusions between the primitive complex reflection groups
$G_4,\ldots,G_{22}$. Those which turn out to have quaternionic systems of 
imprimitivity are shaded. }
\label{GjInclusions}
\begin{tikzpicture}
    \matrix (A) [matrix of nodes, row sep=0.5cm, column sep = 0.2 cm]
    { &&& $G_{11}$ &&&& & &&\\
      & $G_{15}$ && $G_{9}$ && $G_{10}$ &&& & $G_{19}$ &  \\
  $G_{14}$ && \greybox{$G_{13}$} && $G_{8}$ && $G_7$ && $G_{21}$ & $G_{17}$ & $G_{18}$ \\
  & \greybox{$G_{12}$} && && $G_5$&& $G_6$ & \greybox{$G_{22}$} & $G_{20}$ & $G_{16}$  \\
	& && && & $G_4$ & &&& \\
    };
    \draw (A-1-4)--(A-2-2); \draw (A-1-4)--(A-2-4); \draw (A-1-4)--(A-2-6);
    \draw (A-2-2)--(A-3-1); \draw (A-2-2)--(A-3-3); \draw (A-2-4)--(A-3-3);
    \draw (A-2-4)--(A-3-5); \draw (A-2-6)--(A-3-5); \draw (A-2-6)--(A-3-7);
    \draw (A-2-2)--(A-3-7); \draw (A-3-1)--(A-4-2); \draw (A-3-3)--(A-4-2);
    \draw (A-3-1)--(A-4-6); \draw (A-3-7)--(A-4-6); \draw (A-3-7)--(A-4-8);
    \draw (A-4-6)--(A-5-7); \draw (A-4-8)--(A-5-7); \draw (A-2-10)--(A-3-9); 
    \draw (A-2-10)--(A-3-10); \draw (A-2-10)--(A-3-11); \draw (A-3-9)--(A-4-9);
    \draw (A-3-9)--(A-4-10); \draw (A-3-10)--(A-4-9); \draw (A-3-10)--(A-4-11);
    \draw (A-3-11)--(A-4-10); \draw (A-3-11)--(A-4-11); \draw (A-4-10)--(A-5-7);
\end{tikzpicture}
\end{center}
\vskip-0.6truecm
\end{figure}

\begin{lemma}
\label{G4..G22sys}
If a primitive complex reflection group $G_{4},G_5,\ldots,G_{22}$ has a quaternionic
	system of imprimitivity, then it is given by $(1,q)$, where
$$ q=zj, \quad |z|=1, \quad z\in\CC. $$
\end{lemma}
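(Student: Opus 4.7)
The plan is to apply Lemma \ref{abcdq-lemma} generator-by-generator, using the inclusion lattice in Figure \ref{GjInclusions} to keep the case analysis short. Since any system of imprimitivity for $G$ is automatically one for every subgroup of $G$, the set of admissible $q$'s can only shrink as $G$ grows, so it is enough to verify the statement for the minimal primitive complex reflection groups of rank two. Reading off Figure \ref{GjInclusions}, these minimal groups are $G_4$, $G_8$, $G_{12}$, $G_{16}$, and $G_{22}$; every other $G_j$ in the list properly contains at least one of these.

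First I would fix the decomposition $q = z_1 + z_2 j$ with $z_1, z_2 \in \CC$, so that $\overline{q} = \overline{z_1} - z_2 j$ (using $jw = \overline{w}\, j$ for $w \in \CC$). Since a $q \in \CC$ would furnish a complex system of imprimitivity, contradicting the primitivity of $G_j$ over $\CC$, we must have $z_2 \ne 0$. For each complex generator $g = \pmat{a&b\cr c&d}$, the two alternatives of Lemma \ref{abcdq-lemma} can then be expanded and separated into a $\CC$-part and a $j$-part. The $j$-parts, once divided through by the nonzero $z_2$, yield linear (or affine) conditions on $z_1$, while the $\CC$-parts mix $z_1$ with $|z_2|^2$.

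The substance of the proof is the case check for the five minimal groups. For $G_{22}$ with generators $A, A^Z, A^M$ from (\ref{G12G13G22defn}), the first equation for $A = \diag(1,-1)$ forces $|z_1|^2 + |z_2|^2 = 1$; the first equation for $A^Z$ gives $\Im(z_1) = 0$, since its second equation would demand $z_1^2 + |z_2|^2 = -1$, incompatible with $z_1 \in \RR$; and the first equation for $A^M$, after substituting these two constraints, collapses to $-z_1 = 0$, so $z_1 = 0$ and $|z_2| = 1$ as claimed. An entirely analogous calculation handles $G_{12}$ via $F, F^Z, F^{Z^2}$, and the remaining three minimal groups $G_4$, $G_8$, $G_{16}$ yield to the same split-and-solve approach using their explicit generators from \cite{W26}.

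The main obstacle is organizational rather than conceptual: for each generator one must choose between the two alternatives of Lemma \ref{abcdq-lemma} and keep careful track of which combinations of choices remain simultaneously consistent across all generators. Once $q = z_1 + z_2 j$ has been substituted, however, the $\CC$/$j$ split turns the quaternionic constraints into a small system of complex polynomial equations in $z_1$ and $z_2$, and in each of the five cases this combined system is tight enough to force $q$ into the claimed form $q = zj$ with $|z| = 1$.
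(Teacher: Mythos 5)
Your reduction is sound and the case analyses would go through, but you take a genuinely different route from the paper. You exploit the inclusion lattice of Figure \ref{GjInclusions} downwards, reducing to the five minimal groups $G_4,G_8,G_{12},G_{16},G_{22}$ and running Lemma \ref{abcdq-lemma} on each generator with the split $q=z_1+z_2j$. The paper instead observes that \emph{all nineteen} groups share the single common subgroup $G_4\cap\cdots\cap G_{22}=\inpro{\pmat{i&0\cr0&-i},\pmat{0&i\cr i&0}}$, and applies (\ref{abcdqeqns}) to just those two matrices: the alternatives for $\pmat{i&0\cr0&-i}$ are ``$i-\overline{q}iq=0$'' or ``$q=zj$'', those for $\pmat{0&i\cr i&0}$ are ``$q=bi$'' or ``$-qiq+i=0$''; primitivity kills $q=bi$, the pair $\{i-\overline{q}iq=0,\ -qiq+i=0\}$ forces $q\in\RR$ (also killed by primitivity), so $q=zj$ and then $-qiq+i=(1-|z|^2)i=0$ gives $|z|=1$. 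That is one two-matrix computation versus your five case analyses with roughly three generators each; what your version buys in exchange is that the minimal-group computations double as most of the work for Theorem \ref{complexprimitivesys} (they show directly that $G_4,G_8,G_{16}$ admit no system at all). Two small repairs you should make when writing it out: in the $G_{22}$ case your dismissal of the second alternative for $A^Z$ (``$z_1^2+|z_2|^2=-1$, incompatible with $z_1\in\RR$'') is circular as stated, since $z_1\in\RR$ is the conclusion of choosing the first alternative -- the correct argument is that $z_1^2+|z_2|^2=-1$ forces $z_1$ purely imaginary and nonzero, whereupon the $j$-component of $qiq=-i$ forces $z_2=0$, contradicting $z_2\ne0$; and you never rule out the second alternative of (\ref{abcdqeqns}) for $A^M$ (its $j$-part forces $z_1=\tau$, which is incompatible with $|z_1|^2+|z_2|^2=1$, so it does fail, but this must be checked). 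With those details supplied, your proof is complete.
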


\begin{proof} These groups have a common (imprimitive) subgroup
$$ G_4\cap G_5\cap\cdots\cap G_{22} =\inpro{\pmat{i&0\cr0&-i},\pmat{0&i\cr i&0}}. $$
For the above generators (which are not reflections), the equations (\ref{abcdqeqns}) 
of Lemma \ref{abcdq-lemma} for $(1,q)$, $q=a+bi+cj+dk$, 
to give a system of imprimitivity are
$$ i -\overline{q}iq=0, \qquad -qi -iq=0 \Iff
	i -\overline{q}iq=0, \qquad q=cj+dk=zj, \ (z\in\CC), $$
$$ iq +\overline{q}i =0, \qquad -qiq +i =0 \Iff q=bi, \qquad -qiq +i =0 . $$
Since the complex reflection groups are primitive, we cannot have $q=bi$,
so we must have $-qiq +i =0$ in the second set of equations.
Now
$$ -qiq +i =0, \quad i -\overline{q}iq=0 \Implies q=\overline{q} \Implies q\in\RR, $$
so we must have $q=zj$ in the first set of equations, and we calculate
$$ q=zj \Implies -qiq +i=(1-|z|^2)i=0 \Iff |z|=1, $$
giving the condition for $-qiq +i =0$ to hold in this case.
\end{proof}

We observe that $(zj)^2=-1$, so that $\overline{zj}=-zj$ 
and $(zj,1)$ is orthogonal to $(1,zj)$.
We will write the representation of the matrix $g$ with respect to 
the orthogonal basis 
$\{(1,zj),(zj,1)\}$ 
given by $q=zj$, $|z|=1$, $z\in\CC$, as
\begin{equation}
\label{zj-mat-rep}
[g]=U^{-1}gU, \qquad U={1\over\sqrt{2}}\pmat{1&zj\cr zj&1}, \quad
U^{-1}=U^*={1\over\sqrt{2}}\pmat{1&-zj\cr -zj&1}. 
\end{equation}

In view of Lemma \ref{G4..G22sys}, a primitive complex reflection 
group of rank two has a system of imprimitivity, necessarily given by $(1,zj)$, $|z|=1$,
$z\in\CC$, if and only if each of its generators is a monomial matrix 
in the representation of (\ref{zj-mat-rep}).
In this regard, we have

\begin{example}
\label{G12G13G22example}
The generators for $G_{12},G_{13},G_{22}$ of (\ref{G12G13G22defn}) in the representation (\ref{zj-mat-rep}) are
$$ [F] 
= {1\over\sqrt{2}}\pmat{0&1+zj\cr 1-zj&0}, \quad
[F^Z] 
= {1\over\sqrt{2}}\pmat{0&i+zj\cr -i-zj &0}, $$ 
$$ [F^{Z^2}] 
= {1\over\sqrt{2}}\pmat{0&1+i\cr 1-i &0}, \quad
[R^2]=\pmat{0&zj\cr-zj&0}, $$ 
$$ [A]=\pmat{0&zj\cr-zj&0}, \quad
[A^Z]=\pmat{0&i\cr-i&0}, \quad
[A^M]= \pmat{0&{-1-\gs i\over2}+{\tau zj\over2}\cr{-1+\gs i\over2}-{\tau zj\over2}  &0}. $$
and so these groups have systems of imprimitivity given by $q=zj$, $|z|=1$, $z\in\CC$.

The reflection system for the monomial representation of $G_{12}$ over $\HH$
given above, which is generated by 
$\{{1+zj\over\sqrt{2}},{i+zj\over\sqrt{2}},{1+i\over\sqrt{2}}\}$, has
$12$ elements, i.e., 
$$\cL=\hbox{$ \{
{1+zj\over\sqrt{2}},
{1-zj\over\sqrt{2}},
{-1+zj\over\sqrt{2}},
{-1-zj\over\sqrt{2}},
{i+zj\over\sqrt{2}},
{i-zj\over\sqrt{2}},
{-i+zj\over\sqrt{2}},
{-i-zj\over\sqrt{2}},
{1+i\over\sqrt{2}},
{1-i\over\sqrt{2}},
{-1+i\over\sqrt{2}},
{-1-i\over\sqrt{2}}
\} $}.  $$
If the set $\cL$ is multiplied by the inverse of any one of its elements,
to obtain an equivalent reflection system containing $1$, then the group 
$K$ generated by its elements has order $24$.
Thus $\cL$ is equivalent to $L_{12}^\cT$, which is the only quaternionic 
reflection system of that size for a group $K$ of order $24$ 
(either $\cT$ or $\cD_6$), and hence the 
primitive complex reflection group $G_{12}$ is conjugate to the
imprimitive quaternionic reflection group $G_\cT(L_{12}^\cT,1)$.
The reflection system for $G_{13}$ obtained by adding the extra generator $zj$ 
	from $[R^2]$ has 
six additional elements $\pm 1,\pm zj,\pm i$. It is equivalent to
$L_{18}^\cO$,
which is 
the unique quaternionic reflection system of size $18$,
and so the monomial representation for $G_{13}$ given above
is the imprimitive quaternionic reflection group $G_\cO(L_{18}^\cO,1)$.

Similarly, the reflection system for the monomial representation of $G_{22}$ 
is generated by $\{zj,i,{-1-\gs i\over2}+{\tau zj\over2}\}$ and 
has size $30$ (it contains $1$ and generates a group of
order $120$). It is therefore equivalent to $L_{30}^\cI$, which is
the only quaternionic reflection system of this size,
and $G_{22}$ is conjugate to the imprimitive quaternionic
reflection group $G_\cI(L_{30}^\cI,1)$.
\end{example}

We observe that the inclusion $G_{12}\subset G_{13}$ implies the inclusion
	$$ G_\cT(L_{12}^\cT,1) \subset G_\cO(L_{18}^\cO,1), $$
which is apparent from the generators for their reflection systems given
in Table \ref{TOIDngroups-table}.

\begin{example}
\label{collineationsystems}
The imprimitive complex reflection groups of rank two correspond
to the following three collineation groups 
$$ G_4,\ldots,G_7 \ \hbox{(tetrahedral)}, \quad
G_8,\ldots,G_{15} \ \hbox{(octahedral)}, \quad
G_{16},\ldots,G_{22} \ \hbox{(icosahedral)}. $$
Those with the same collineation group have the same complex systems
of imprimitivity (none in this case). However, this is not the case for 
	quaternionic systems of primitivity, as $G_{12}$, $G_{13}$ (octahedral)
	and $G_{22}$ (icosahedral) have infinitely many systems, whilst the
	other groups have none.
\end{example}

We can now determine all the quaternionic systems of imprimitivity 
of the primitive complex reflection groups, using the inclusions of
Figure \ref{GjInclusions} (see \cite{W26}) to expedite the proof.

\begin{theorem}
\label{complexprimitivesys}
The primitive complex reflection groups of rank two with quaternionic
systems of imprimitivity are
	$$ G_{12},\, G_{13}, \ \hbox{(octahedral type)} 
	\qquad G_{22} \ \hbox{(iscosahedral type)}. $$
Their systems of imprimitivity are given by $(1,zj)$, $|z|=1$, $z\in\CC$,
	and a corresponding change of basis matrix conjugates them
to the imprimitive quaternionic reflection (base) groups
$$ G_\cT(L_{12}^\cT,1), \quad
G_\cO(L_{18}^\cO,1), \quad
	G_\cI(L_{30}^\cI,1), $$
respectively. The remaining $16$ groups
$$ G_4,\, G_5,\, G_6,\, G_7, \quad
G_8,\, G_9,\, G_{10},\, G_{11},\, G_{14},\, G_{15}, \quad
G_{16},\, G_{17},\, G_{18},\, G_{19},\, G_{20},\, G_{21},  $$
have no quaternionic systems of imprimitivity.
\end{theorem}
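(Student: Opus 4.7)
The plan rests on a short direct computation of $U^{-1}MU$, where $U$ is as in (\ref{zj-mat-rep}) with $|z|=1$ and $M=\pmat{a & b\cr c & d}\in\GL_2(\CC)$. By Lemma \ref{G4..G22sys}, any quaternionic system of imprimitivity for one of $G_4,\ldots,G_{22}$ must be of the form $(1,zj)$ with $z\in\CC$, $|z|=1$, and Example \ref{G12G13G22example} already handles the positive cases $G_{12}$, $G_{13}$, $G_{22}$: each such $(1,zj)$ gives a system of imprimitivity and, by the uniqueness (Table \ref{TOIDngroups-table}) of quaternionic reflection systems of sizes $12$, $18$, $30$ in $\cT, \cO, \cI$, the resulting monomial groups are identified with $G_\cT(L_{12}^\cT,1)$, $G_\cO(L_{18}^\cO,1)$, $G_\cI(L_{30}^\cI,1)$.

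The key observation is that
\[
U^{-1}MU \;=\; \tfrac{1}{2}\pmat{(a+\bar d)+z(b-\bar c)j & (b+\bar c)+z(a-\bar d)j\cr (c+\bar b)+z(d-\bar a)j & (d+\bar a)+z(c-\bar b)j},
\]
so that $M$ is diagonal in the new basis iff $d=\bar a$ and $c=-\bar b$, and anti-diagonal iff $d=-\bar a$ and $c=\bar b$; both conditions are independent of $z$. Now if $M$ is a complex reflection of order $n\ge3$, with eigenvalues $1$ and $\lambda$ a primitive $n$-th root of unity (so $\lambda\ne\pm1$ and hence $\lambda\notin\RR$), then its trace $a+d = 1+\lambda$ has both nonzero real and nonzero imaginary parts. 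But $d=\bar a$ forces the trace to be $2\Re(a)\in\RR$, while $d=-\bar a$ forces it to be the pure imaginary $2i\Im(a)$; neither is compatible. Hence no complex reflection of order $\ge3$ can be monomial in any $(1,zj)$ basis.

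To conclude, I would check that each of the remaining $16$ primitive complex reflection groups contains a reflection of order $\ge3$: this is direct for $G_4$--$G_{11}$ and $G_{16}$--$G_{21}$, whose defining reflections have orders among $3$, $4$, $5$; and for $G_{14}, G_{15}$ it follows from the containment $G_4\subset G_5\subset G_{14}\subset G_{15}$ in Figure \ref{GjInclusions}. Since a system of imprimitivity for $G$ requires every element of $G$ to be monomial in the new basis, the observation of the previous paragraph rules out any such system. The only non-trivial step is the matrix computation of $U^{-1}MU$; its decisive feature --- the decoupling of the monomiality conditions from $z$ --- then makes the argument uniform across all $16$ groups at once.
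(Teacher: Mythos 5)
Your proposal is correct, and for the negative cases it takes a genuinely different (and arguably cleaner) route than the paper. The positive cases are handled identically: Lemma \ref{G4..G22sys} forces $q=zj$, $|z|=1$, and Example \ref{G12G13G22example} supplies the monomial representations and the identification with $G_\cT(L_{12}^\cT,1)$, $G_\cO(L_{18}^\cO,1)$, $G_\cI(L_{30}^\cI,1)$. For the sixteen remaining groups, the paper reduces via Figure \ref{GjInclusions} to the minimal groups $G_4,G_8,G_{16}$ and then argues through the classification: a quaternionic system of imprimitivity would exhibit the group as an imprimitive quaternionic reflection group, all of which contain order-two reflections, which rules out $G_4$ and $G_{16}$; the group $G_8$ needs a separate argument via its order-four monomial reflection $R$ and Lemma \ref{imprimitivitylemma}. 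Your computation of $U^{-1}MU$ reproduces exactly the conditions $a=\overline{d},\,b=-\overline{c}$ (diagonal) or $a=-\overline{d},\,b=\overline{c}$ (anti-diagonal) that appear in the paper's ``alternative'' proof, but where the paper then checks these fail for the specific generators $Z,R,M$, you extract a uniform obstruction: the conditions force $\trace M=a+d$ to be real or purely imaginary, whereas a complex reflection of order $m\ge3$ has trace $1+\gl$ with $\gl$ a nonreal root of unity other than $-1$, hence neither. This trace criterion is independent of $z$, needs no appeal to the classification of imprimitive quaternionic reflection groups, and is in the same spirit as the paper's Lemma \ref{insightlemma}; it dispatches all three minimal groups (indeed all sixteen) at once, including $G_8$, which the paper must treat separately.

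One small point of rigour: your final step asserts that each of $G_4$--$G_{11}$ and $G_{16}$--$G_{21}$ visibly has a defining reflection of order $3$, $4$ or $5$, but the paper only records explicit generators for $G_4=\inpro{Z,Z^S}$, $G_8=\inpro{R,R^F}$ and $G_{16}=\inpro{M,M^A}$ (orders $3$, $4$, $5$), and several of the other groups are also generated partly by order-two reflections. The cleanest repair is the one you already use for $G_{14},G_{15}$: by Figure \ref{GjInclusions} every one of the sixteen groups contains one of $G_4$, $G_8$, $G_{16}$, hence contains a reflection of order at least three, and your trace obstruction then applies. This is a presentational adjustment, not a gap.
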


\begin{proof}
The first part is given in Example \ref{G12G13G22example}.
It therefore suffices to show that the other groups have no systems of imprimitivity.
This can be proved (directly) by applying Lemma \ref{abcdq-lemma}
to each of the groups, 
or, equivalently, by finding the matrix representation (\ref{zj-mat-rep}) 
of their generators. However, in view of the inclusions of Figure \ref{GjInclusions}, 
it suffices to show that the groups
$$ G_4=\inpro{Z,Z^S}, \qquad G_8=\inpro{R,R^F}, \qquad
G_{16}=\inpro{M,M^A}, $$
have no systems of imprimitivity, and hence nor do the groups which contain them.

If the group $G=G_4,G_8,G_{16}$ had a quaternionic system of imprimitivity,
then the corresponding
imprimitive
quaternionic reflection group would have the same order and same number of reflections.
In particular, 
since the  imprimitive quaternionic reflection groups all have reflections of order two
corresponding to the reflection system $\cL$, the group $G$ must have
reflections of order two. Since all the reflections of $G_4$ and $G_{16}$
have orders $3$ and $5$, respectively, 
we only need consider $G_{8}=\inpro{R,R^F}$. 
Since $R$ is a monomial reflection of order $4$,
Lemma \ref{imprimitivitylemma} (with $h=-i$) 
	gives
	that group generated by $R$ has only the system of imprimitivity given by $(1,0)$.
Therefore, $G_8$, which contains $R$ 
and does not have a system of imprimitivity given by $(1,0)$, has no quaternionic
systems of imprimitivity, and we are done.

Alternatively, to show $G_4,G_8,G_{16}$ have no systems of imprimitivity directly 
from Lemma \ref{abcdq-lemma}, 
we can use the fact that $q=zj$ and $a,b,c,d\in\CC$ to simplify 
(\ref{abcdqeqns}) to
$$ (a+\overline{d})+(b-\overline{c})zj=0, \qquad
-(\overline{b}+c)+(\overline{a}-d)zj=0, $$
i.e., $ a=-\overline{d}, \ b=\overline{c}$, or
 $a=\overline{d}, \ b=-\overline{c}$.
These are easily seen to not hold for the generators $g=Z,R,M$.
\end{proof}

\begin{table}[h]
\caption{The systems of imprimitivity for the complex reflection groups 
$G_4,\ldots,G_{22}$ (primitive) and $G(n,p,2)$ (imprimitive) 
of (\ref{Gnp2defn}), as
classified by Shephard and Todd. The real reflection group $G(4,4,2)\cong_\CC G(2,1,2)=D_4$ is
included for 
	comparison.}
\label{ComplexSystems-table}
\begin{center}
\vskip-0.55truecm
\begin{tabular}{ |  >{$}l<{$} | >{$}c<{$} | >{$}l<{$} | l | }
\hline
&&&\\[-0.3cm]
	G & \hbox{complex} & \hbox{quaternionic} & comments \\[0.1cm]
\hline
&&&\\[-0.3cm]
G_{12} & & (1,zj),\ z\in\CC,\ |z|=1 & $\cong_\HH Q_\cT(L_{12}^\cT,1)$  \\
G_{13} & & (1,zj),\ z\in\CC,\ |z|=1 & $\cong_\HH G_\cO(L_{18}^\cO,1)$  \\
G_{22} & & (1,zj),\ z\in\CC,\ |z|=1 & $\cong_\HH G_\cI(L_{30}^\cI,1)$  \\
G_{n},\ n\ne 12,13,22 & & & no systems  \\
&&& \\[-0.3cm]
\greybox{G(4,4,2)} & (1,0),(1,1),(1,i) & (1,zj),\ z\in\CC,\ |z|\le1 & real reflection group \\
G(4,2,2) & (1,0),(1,1),(1,i) & (1,zj),\ z\in\CC,\ |z|=1 & $\cong_\HH G(2,1,2,1)$ \\
G(n,n,2),\ n\ne4 & (1,0) & (1,zj),\ z\in\CC,\ |z|\le1 & \\
G(n,{n\over 2},2),\ n\ne4\, \hbox{($n$ even)} & (1,0) & (1,zj),\ z\in\CC,\ |z|=1 & $\cong_\HH G({n\over2},1,{n\over2},1)$ \\ 
G(n,p,2),\ n\ne4, p\ne n,{n\over2}  & (1,0) & & no additional systems \\ [3pt] 
\hline
\end{tabular}
\end{center}
\end{table}

From Table \ref{ComplexSystems-table}, we observe that the only 
complex reflection groups of rank two with additional complex systems of
imprimitivity are $G(4,4,2)\cong_\CC G(2,1,2)$ (real reflection group) and 
$G(4,2,2)$ (see Theorem 2.16 of \cite{LT09}).

\section{The systems of imprimitivity of the quaternionic reflection groups}

We now consider the systems of imprimitivity for the quaternionic
reflection groups.

The primitive quaternionic reflection groups of rank two consist of
six with primitive complexifications (see \cite{C80}, \cite{W24}, \cite{BW25}),
and an infinite family of those with imprimitive complexifications (see \cite{C80}, \cite{T25}).
All of these, by definition of being primitive, have no quaternionic 
systems of imprimitivity.

The imprimitive quaternionic reflection groups are given the Table \ref{TOIDngroups-table},
and we begin with those with group $K=\cD_n$. 
We define the {\bf dicyclic groups} (binary dihedral groups) 
$$ \cD_n := \inpro{\go,j}, \qquad \go:=\zeta_{2n} = e^{\pi i\over n}, \qquad n\ge 2,
$$ 
where $\go$ is a primitive $2n$-th root of unity.
This group has $4n$ has elements, of two types
\begin{equation}
\label{Dntwotypes}
\go^m, \qquad \go^\ell j=j\go^{-\ell}, \qquad 1\le m,\ell \le 2n.
\end{equation}
The group $\cD_2$ is the quaternion group 
$Q_8=\{1,-1,i,-i,j,-j,k,-k\}$, 
which has a slightly special 
structure, since $\go=i$, so that $i$, $j$, $k$ play the same role,
and this will lead to additional systems of imprimitivity in this case
(see Example \ref{KMUBexample}).

The reflection systems for $\cD_n$ are given in \cite{W25} in terms
of generators as
$$ L_{(a,b)}^{(n)} := L(\{1,\go^a,j,\go^b j\}),
\qquad (a,b)\in \gO_n,  $$
where
\begin{equation}
\label{Andefn}
\gO_n  := \{(a,b):1\le a\le b\le n,\ a\divides n,\  b\divides n,\ \gcd(a,b)=1\}.
\end{equation}
Each of these has a different number of elements, which is given by
\begin{equation}
\label{Labnsize}
|L_{(a,b)}^{(n)}|={2n\over a}+{2n\over b}.
\end{equation}
The corresponding imprimitive quaternionic reflection groups
with $H$ cyclic 
are
\begin{equation}
\label{Gnabrdefn}
G(n,a,b,r):= G_{\cD_n}(L_{(a,b)}^{(n)},C_r)
=\cG(\{1,\go^a,j,\go^b j\},\{\go^{2n\over r}\}), \qquad [n,a,b,r]\in\gL_n,
\end{equation}
where
\begin{equation}
\label{gLndefn}
\gL_n= \bigcup_{(a,b)\in\gO_n}\{ \hbox{$[n,a,b,{n\over ab}]$}\} \cup 
\bigcup_{(a,b)\in\gO_n\atop ab\, {\rm is}\, {\rm odd}} \{\hbox{$[n,a,b,{2n\over ab}]$}\}.
\end{equation}
The reflection group $G(n,1,n,1)=\cG(\{1,\go,j,-j\},\{\})=\cG(\{1,\go,j\},\{\})$ 
will be seen to be conjugate to the imprimitive complex reflection group 
$G(2n,n,2)=\cG(\{1,\go\},\{-1\})$, see Example \ref{G(n,1,n,1)iscomplex}, 
and so we exclude its index $[n,1,n,1]$. 
For $L_{(1,1)}^{(n)}=\cD_n$, there are also reflection groups with $H$ not cyclic, 
namely,
\begin{align}
\label{DnHDn}
G(\cD_n,\cD_n,\cD_n) &=\cG(\{1,\go,j,\go j\},\{\go,j\}), \cr
G(\cD_n,\cD_n,\cD_{n/2}) &=\cG(\{1,\go,j,\go j\},\{j\}),
\qquad \hbox{($n$ even, $n\ge4$)}.
\end{align}
Therefore the imprimitive quaternionic reflection groups for $K=\cD_n$ are
those of (\ref{DnHDn}) and 
\begin{equation}
\label{Gnabrquaternionic}
G(n,a,b,r), \qquad [n,a,b,r]\in\gL_n^*:=\gL_n\setminus\{[n,1,n,1]\},
\end{equation}
where
$$ |\gL_n^*|={\tau(2n^2)\over2}, $$
with $\tau(2n^2)$ the number of divisors of $2n^2$.

We now give the quaternionic systems of imprimitivity for the
groups $G(n,a,b,r)$, including $G(n,1,n,1)$, for the
purpose of comparison.


\begin{theorem}
\label{G(n,a,b,r)sys}
Let $n\ge2$. The imprimitive 
reflection group $G(n,a,b,r)$, $[n,a,b,r]\in\gL_n$,
has an additional 
system of imprimitivity given by 
	$(1,q)\in\HH^2$ 
in precisely the 
cases:
\begin{enumerate}[(a)]
\item $(1,j)$, $(1,k)$ for the indices $[n,1,n,1]$, $[n,1,n,2]$ ($n$ odd),
  $[n,1,{n\over2},2]$ ($n$ even), and $[n,2,{n\over2},1]$ ($n$ even, ${n\over2}$ odd).
\item $(1,1)$, $(1,i)$ for the indices 
$[ 2, 1, 2, 1 ]$, 
$[ 2, 1, 1, 2 ]$. 
\end{enumerate}
and the infinite family
\begin{enumerate}[(a)]
\setcounter{enumi}{2}
\item $(1,\ga k)$, $-1<\ga< 1$ ($\ga\ne0$), for the indices $[n,1,n,1]$.
\end{enumerate}
We have $G(n,1,n,1)\cong_\HH G(2n,n,2)$, and so the
group  $G(n,1,n,1)$ above is counted as an imprimitive complex reflection group
	(not a quaternionic one).

The other imprimitive quaternionic reflection groups $G(\cD_n,\cD_n,\cD_n)$
and  $G(\cD_n,\cD_n,\cD_{n/2})$ have no
additional systems of imprimitivity.
\end{theorem}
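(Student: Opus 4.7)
The plan is to apply Lemma \ref{imprimitivitylemma} to each family. For the outliers $G(\cD_n,\cD_n,\cD_n)$ and $G(\cD_n,\cD_n,\cD_{n/2})$, the set $\cH$ contains the non-real element $\go$ or $j$, so condition (ii) of the lemma (which demands that every nontrivial $h\in\cH$ equal the real number $-|q|^2$) cannot be satisfied. Hence these two families have no additional systems of imprimitivity, settling the final assertion of the theorem.

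For $G(n,a,b,r) = \cG(\{1,\go^a,j,\go^b j\},\{\go^{2n/r}\})$, the sole nontrivial element of $\cH$ is $h = \go^{2n/r}$, a root of unity. Condition (ii) requires $h = 1$ (forcing $r = 1$, with no restriction on $|q|$) or $h = -1$ with $|q| = 1$ (forcing $r = 2$); no additional system arises when $r \ge 3$. For the surviving values $r \in \{1,2\}$, I write $q = a_0 + a_1 i + a_2 j + a_3 k$ and note that the four conditions $\Re(\gb q) = 0$ for $\gb \in \cL$ reduce respectively to $a_0 = 0$, $\sin(a\pi/n)\,a_1 = 0$, $a_2 = 0$, and $\sin(b\pi/n)\,a_3 = 0$, while the alternatives $q = \pm\overline{\gb}$ single out $q \in \{\pm 1,\ \pm\go^{-a},\ \pm j,\ \pm \go^b j\}$ (all with $|q| = 1$).

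I next partition the casework by which elements of $\cL$ invoke the $\Re = 0$ clause versus the alternative clause. When all four invoke $\Re = 0$, the constraints of $\gO_n$ (namely $\gcd(a,b)=1$ with $a\le b\le n$) force $(a,b)=(1,n)$ as the sole case admitting nontrivial solutions, namely $q = \ga k$ with $\ga\in\RR$; this produces the continuous family (c) when $r = 1$, and also contributes $(1,k)$ (the case $|\ga|=1$). For the alternatives $q = \pm 1$ or $q = \pm\go^{-a}$, the residual equations force $\cos(a\pi/n) = 0$, i.e.\ $a = n/2$, which together with the $\gO_n$ constraints pushes us to $n = 2$, yielding family (b) for the two indices $[2,1,2,1]$ and $[2,1,1,2]$. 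For the alternatives $q = \pm j$ or $q = \pm\go^b j$, the residual equations force $\cos(b\pi/n) = 0$, i.e.\ $b = n/2$, or else the special coincidence $b = n$ (in which case $\go^b j = -j$ and the $q = \pm j$ alternative is simultaneously satisfied by both $\gb = j$ and $\gb = -j$ in $\cL$). For $b = n/2$ the divisibility $ab\mid n$ forces $a \in \{1,2\}$, and matching against the formula for $r$ in the definition of $\gL_n$ pins down exactly $[n,1,n/2,2]$ and $[n,2,n/2,1]$ (the latter requiring $n/2$ odd for coprimality); for $b = n$ the condition $\gcd(a,n)=1$ forces $a = 1$, giving $[n,1,n,1]$ and $[n,1,n,2]$ (the latter only when $n$ is odd). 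Together these recover family (a). The conjugacy $G(n,1,n,1) \cong_\HH G(2n,n,2)$ stated in the theorem is supplied by Example \ref{G(n,1,n,1)iscomplex}.

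The main obstacle is the bookkeeping: ensuring the case analysis is exhaustive (in particular recognizing the coincidence at $b=n$ where two alternatives collapse onto a single $q$, and noting that the $a = n/2$ cases only survive at $n = 2$ after imposing the $\gO_n$ constraints), and then correctly matching each surviving $(a,b)$ to the finitely many values of $r$ admitted by the definition of $\gL_n$.
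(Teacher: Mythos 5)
Your proposal is correct and follows essentially the same route as the paper: apply Lemma \ref{imprimitivitylemma}, use condition (ii) to force $r\in\{1,2\}$ (and dispose of $G(\cD_n,\cD_n,\cD_n)$ and $G(\cD_n,\cD_n,\cD_{n/2})$ via a non-real element of $\cH$), then run the case analysis on condition (i) over the alternatives $q=\pm\overline{\gb}$ versus $\Re(\gb q)=0$, with the all-$\Re$ branch yielding $(a,b)=(1,n)$ and $q=\ga k$. The only difference is organizational (you treat the all-$\Re$ case first, the paper last), and your bookkeeping of the $\gO_n$ and $\gL_n$ constraints matches the paper's.
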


\begin{proof} We apply Lemma \ref{imprimitivitylemma}, with
$$ \cL=\{1,\go^a,j,\go^b j\}, \qquad
	\cH=\{\go^{2n\over r}\}. $$
The two possibilities of (ii) are
\begin{align}
\label{req2cdnonh}
\go^{2n\over r}=-|q|^2 & \Iff \go^{2n\over r}=-1, \quad |q|=1 \cr
	&\Iff r=2, \quad |q|=1, \cr
	\go^{2n\over r}=1 & \Iff r=1,
\end{align}
which gives the necessary condition $r=1,2$. We observe that for $r=1$, there is
as yet no restriction on $|q|$.
Taking $\gb=1,\go^a,j,\go^b j$ in (i), respectively, 
gives the following two conditions, one of which must hold for there
to be a system of imprimitivity,  
\begin{align*}
\Re(q)=0, & \qquad q=\pm1, \cr
\Re(\go^a q)=0, & \qquad q=\pm\go^{-a}, \cr
\Re(jq)=0, & \qquad q=\pm j, \cr
\Re(\go^b j q)=0, & \qquad q=\pm\go^{b} j.
\end{align*}

First suppose that $q=\pm1$, 
then $\Re(j q)=\pm\Re(j)=0$, $\Re(\go^b jq)=\pm\Re(\go^b j)=0$ hold,
and so to obtain a system of imprimitivity one of the
following must hold
	$$ \Re(\go^a)=0, \quad \go^{2a}=1 \Iff \go^{2a}=(\go^a)^2=(\pm i)^2=-1, \quad \go^{2a}=1, $$
i.e., $2a=n$ or $2a=2n$. We cannot have $a=n$, which would imply 
$\gcd(a,b)=n>1$, and so we require $a={n\over2}$, which implies
$$ a={n\over2}\le b\le {n\over n/2}=2. $$
We cannot have $a=2$, which would imply $\gcd(a,b)=2$, and so we require
	$$ a={n\over2}=1 \Implies n=2. $$
There are two indices in $\gL_2$ of the form $[2,1,b,r]$, $r=1,2$, namely 
	$[2,1,2,1]$, $[2,1,1,2]$, and so we obtain the indices of (b).

Henceforth, we can suppose that $\Re(q)=0$. 
Suppose that $q=\pm\go^a$, then we have
$$ \pm\go^a=\pm i \Iff \go^{2a}=-1, $$
which is satisfied by the indices $[2,1,2,1]$, $[2,1,1,2]$, as before,
and (b) is proved, since this $q$ satisfies 
$\Re(j q)=0$ and $\Re(\go^b j q)=0$.

Now we may suppose that $\Re(q)=0$, $\Re(\go^a q)=0$. Consider the case
$q=\pm j$, which satisfies these conditions. To satisfy the last, 
we must have one of
	$$ \Re(\go^b)=0, \quad \go^b=\pm 1 \Iff
	b={n\over 2}, \quad b=n. $$
For $b=n$, we have $a=1$, and $r=1$ (base group) and $r=2$ for $n$ odd (higher
	order group).
For $b={n\over2}$ ($n$ even), we can have $a=1$, which gives $r=2$ (base group), 
and $a=2$ when ${n\over2}$ is odd, which gives $r=1$ and there is no higher
order group.

Finally, we may suppose $\Re(q)=0$, $\Re(\go^a q)=0$, $\Re(jq)=0$.
If $\Re(\go^bjq)=0$, then 
$$ \Re(\go^bjq)=0 \Iff \go^bjq=\pm i|q|
\Iff q=\pm |q| \go^{b-{n\over2}} j. $$
Thus we have the two cases
$$ q=\pm |q| \go^{b-{n\over2}} j, \qquad q=\pm \go^{b} j, $$
which both satisfy $\Re(q)=0$, $\Re(\go^a q)=0$, and for a system of 
imprimitivity to exist we must have
$\Re(jq)=0$, i.e., 
\begin{align*}
\Re(\go^{{n\over2}-b})=0, \quad \Re(\go^{-b})=0
&\Iff \go^{{n\over2}-b}=\pm i, \quad \go^{-b}=\pm i \cr
&\Iff \go^{n-2b}=-\go^{-2b}=-1, \quad \go^{-2b}=-1 \cr
&\Iff -2b=2n, \quad n=-2b \cr
	&\Iff b=n, \quad b={n\over2},
\end{align*}
for which have previously determined the possible indices. We observe that in these
cases $q$ is given by
$$ q=\pm |q| \go^{n-{n\over2}} j=\pm|q|ij=\pm|q|k, \qquad q=\pm \go^{n\over2} j=\pm ij=\pm k, $$
and so, together with the previous case, we obtain (a). The only case where $|q|$ is 
not restricted to the value $1$ by (\ref{req2cdnonh}) or the choice $q=\pm\go^b j$ is for 
the index $[n,1,n,1]$, which gives (c).

The conjugacy $G(n,1,n,1)\cong_\HH G(2n,n,2)$ is considered in Example \ref{G(n,1,n,1)iscomplex} (to follow).
The groups $G(\cD_n,\cD_n,\cD_n)$ and $G(\cD_n,\cD_n,\cD_{n/2})$ contain contain
the reflection given by $h=\go^2$, which does not satisfy the 
condition (ii) of Lemma  \ref{imprimitivitylemma}, and so these
groups have no systems of imprimitivity.
\end{proof}

Theorem \ref{G(n,a,b,r)sys} 
corresponds to the Theorem 6.4 of \cite{T25}, as we now explain.

\begin{example}
In 
\cite{T25}, 
the groups $G(n,a,b,r)$ are described via the ``standard copy''
\begin{equation}
\label{TGnabr} 
G(\cD_n,\cC_r,\psi_c)
=\inpro{\pmat{0&1\cr1&0},\pmat{0&j\cr-j&0},\pmat{\go^{2n\over r}&0\cr0&1},
\pmat{\go&0\cr0&\go^c} },
\end{equation}
indexed by $(n,r,c)$,
where $c=1$, or $1<c\le {n\over r}$ ($r\divides 2n$) and
$$ \gcd(c,{2n\over r})=\gcd(\nu,\kappa)=1, \qquad
\nu:={{2n\over r}\over\gcd({2n\over r},c-1)}, \quad
\kappa:={{2n\over r}\over\gcd({2n\over r},c+1)}. $$
We observe that $c=1$ also satisfies the above condition.
The fourth generator in (\ref{TGnabr}) is not a reflection. 
From Lemma 4.8 and Lemma 4.10 of \cite{T25}, it appears that the reflection system
for $G(\cD_n,\cC_r,\psi_c)$ contains $\go^\kappa$ and $\go^\nu j$
(in addition to $1,j$), so that
	$$ G(\cD_n,\cC_r,\psi_c)=\cG(\{1,\go^\kappa,j,\go^\nu j\},\{\go^{2n\over r}\}), $$
and we have the following correspondence between the respective indices
$$ [n,a,b,r]\in\gL_n \iff (n,r,c), \qquad
\{a,b\}=\{\nu,\kappa\}=\{ {{2n\over r}\over\gcd({2n\over r},c-1)},
{{2n\over r}\over\gcd({2n\over r},c+1)} \}. $$
Moreover, the groups $G(n,a,b,r)$ and $G(\cD_n,\cC_r,\psi_c)$ are equal
	when $\kappa\le\nu$, and are isomorphic (but not equal) when $\kappa>\nu$.
In the latter case, 
$$ G(n,a,b,r)
=\inpro{\pmat{0&1\cr1&0},\pmat{0&j\cr-j&0},\pmat{\go^{2n\over r}&0\cr0&1},
	\pmat{\go&0\cr0&\go^{-c}} }, \qquad \kappa>\nu, $$
and, by (\ref{RefSys-equiv}), we have
$$ G(n,a,b,r)= U G(\cD_n,\cC_r,\psi_c) U^{-1}, \qquad U=\pmat{1&0\cr0& j}. $$
In this way, the Theorem 6.4 of \cite{T25} is seen to be equivalent to
Theorem \ref{G(n,a,b,r)sys}. 
\end{example}

\begin{example} 
\label{G(n,1,n,1)iscomplex}
We consider the group
$$ G(n,1,n,1)=\cG(\{1,\go,j,-j\})=\cG(\{1,\go,j\})
=\inpro{\pmat{0&1\cr1&0},\pmat{0&\go\cr\overline{\go}&0},\pmat{0&j\cr-j&0}}. $$
For the system of imprimitivity given by $(1,j)$, we take
the change of basis matrix 
$$U={1\over\sqrt{2}}\pmat{1&j\cr j&1}. $$
The conjugation (change of basis) $g\mapsto U^{-1}gU$ applied to 
above generators gives
$$ \pmat{0&1\cr1&0}\mapsto \pmat{0&1\cr1&0}, \qquad
 \pmat{0&\go\cr\overline{\go}&0}\mapsto \pmat{0&\go\cr\overline{\go}&0}, \qquad
 \pmat{0&j\cr-j&0}\mapsto \pmat{-1&0\cr0&1},  $$
i.e., the generators of the imprimitive  complex reflection group
$$ G(2n,n,2)=\cG(\{1,\go\},\{-1\}), $$
which is therefore conjugate to $G(n,1,n,1)$.

Further, the other groups of Theorem \ref{G(n,a,b,r)sys} (a) with this system of imprimitivity are
\begin{align*}
G(n,1,n,2) &=\cG(\{1,\go,j\},\{-1\}), \cr
G(n,1,{n\over 2},2) &=\cG(\{1,\go,j,ij\},\{-1\}), \cr
	G(n,2,{n\over 2},1) &=\cG(\{1,\go^2,j,ij\},\{\}), 
\end{align*}
with the generators for $h=-1\in\cH$ and $\gb=ij=k\in\cL$ conjugating as follows
$$ \pmat{-1&0\cr0&1} \mapsto \pmat{0&-j\cr j&0}, \qquad
\pmat{0&k\cr-k&0}\mapsto\pmat{0&k\cr-k&0}. $$
Therefore none of these groups conjugate to a complex reflection group.
\end{example}

The reflection systems for the imprimitive {\em quaternionic} reflection groups
given by Theorem \ref{G(n,a,b,r)sys} are summarised in the first section of Table \ref{QuaternionicSystems-table}.

We now consider the groups $G_K(L,H)$ for $K$ the {\bf binary tetrahedral}, {\bf
octahedral} and {\bf icosahedral groups} which given by (see Table \ref{TOIDngroups-table})
\begin{align*}
\cT &:=\inpro{i,j,{1+i+j+k\over2}}, \cr
\cO &:=\inpro{{1+i\over\sqrt{2}},{1+j\over\sqrt{2}},{1+i+j+k\over2}},  \cr
\cI &:=\inpro{{1+i\over\sqrt{2}},{1+i+j+k\over2},{\tau+\gs i-j\over2}}, \quad
	\tau={1+\sqrt{5}\over2},\ \gs=1-\tau.
\end{align*}

\vskip-0.7truecm
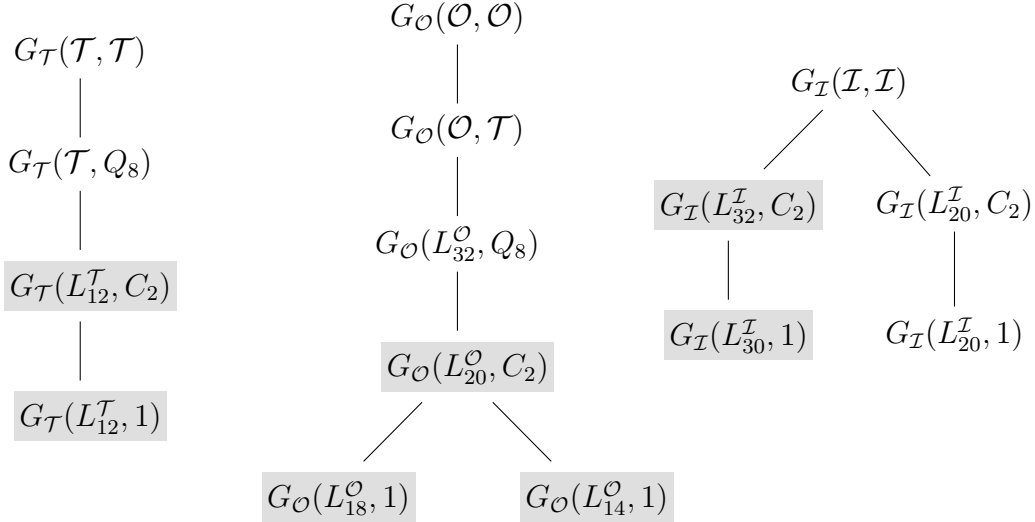
\begin{figure}[!h]
\caption{The inclusions of the reflection groups for $\cT,\cO,\cI$ given in Table \ref{TOIDngroups-table},
	with those having additional systems of imprimitivity shaded in grey. }
\label{TOIinclusionsfigure}
\begin{center}
	\begin{tabular}[t]{ p{3.0truecm} p{4.8truecm} p{5.8truecm} }
\begin{tikzpicture}
\matrix (A) [matrix of nodes, row sep=0.8cm, column sep = 0.0 cm]
    {
	    $G_\cT(\cT,\cT)$  \\
	    $G_\cT(\cT,Q_8)$  \\
	    \greybox{$G_\cT(L_{12}^\cT,C_2)$}  \\
	    \greybox{$G_\cT(L_{12}^\cT,1)$}  \\
	    $ $ \\
    };
    \draw (A-1-1)--(A-2-1);
    \draw (A-2-1)--(A-3-1);
    \draw (A-3-1)--(A-4-1);
\end{tikzpicture}
 & 
\begin{tikzpicture}
    \matrix (A) [matrix of nodes, row sep=0.8cm, column sep = -1.0 cm]
    {
	    & $G_\cO(\cO,\cO)$ &  \\
	    & $G_\cO(\cO,\cT)$ &  \\
	    & $G_\cO(L_{32}^\cO,Q_8)$ &  \\
	    & \greybox{$G_\cO(L_{20}^\cO,C_2)$} &  \\
	    \greybox{$G_\cO(L_{18}^\cO,1)$} && \greybox{$G_\cO(L_{14}^\cO,1)$} \\
    };
    \draw (A-1-2)--(A-2-2);
    \draw (A-2-2)--(A-3-2);
    \draw (A-3-2)--(A-4-2);
    \draw (A-4-2)--(A-5-1);
    \draw (A-4-2)--(A-5-3);
\end{tikzpicture}
	  & 
		\begin{tikzpicture}
    \matrix (A) [matrix of nodes, row sep=0.8cm, column sep = -0.7 cm]
    {
	    & $G_\cI(\cI,\cI)$ &  \\
			\greybox{$G_\cI(L_{32}^\cI,C_2)$} && $G_\cI(L_{20}^\cI,C_2) $  \\
			\greybox{$G_\cI(L_{30}^\cI,1)$} && $G_\cI(L_{20}^\cI,1) $  \\
	    $ $ \\
	    $ $ \\
    };
    \draw (A-1-2)--(A-2-1);
    \draw (A-1-2)--(A-2-3);
    \draw (A-2-1)--(A-3-1);
    \draw (A-2-3)--(A-3-3);
\end{tikzpicture}
\end{tabular}
\end{center}
\vskip-1.2cm
\end{figure}

\vskip-1.5truecm
\begin{theorem}
\label{TOIsys}
Let $\cT,\cO,\cI$ be the binary tetrahedral, octahedral and icosahedral groups.
Then the associated quaternionic reflection groups, 
as listed in Table \ref{TOIDngroups-table},
have additional systems of imprimitivity given by $(1,q)\in\HH^2$ in
	the following cases:
\begin{enumerate}[(a)]
\item $(1,{j-k\over\sqrt{2}})$ for $G_\cT(L_{12}^\cT,C_2)$, 
	$G_\cO(L_{14}^\cO,1)$, $G_\cO(L_{20}^\cO,C_2)$.
\item $(1,{j-\tau i-\gs k\over2})$ for $G_\cI(L_{32}^\cI,C_2)$.
\end{enumerate}
and the infinite families
\begin{enumerate}[(a)]
\setcounter{enumi}{2}
\item $(1,\ga {j-k\over\sqrt{2}})$, $-1<\ga\le1$ ($\ga\ne0$) for $G_\cT(L_{12}^\cT,1)$,
	$G_\cO(L_{18}^\cO,1)$.
\item $(1,\ga {j-\tau i-\gs k\over2})$, $-1<\ga\le1$ ($\ga\ne0$) for $G_\cI(L_{30}^\cI,1)$.
\end{enumerate}
For the groups above giving infinite families, we have the conjugacies
\begin{equation}
\label{primitiveimprimitiveisos}
G_{\cT}(L_{12}^\cT,1)\cong_\HH G_{12}, \qquad
G_{\cO}(L_{18}^\cO,1)\cong_\HH G_{13}, \qquad
G_{\cI}(L_{30}^\cI,1)\cong_\HH G_{22}.
\end{equation}
\end{theorem}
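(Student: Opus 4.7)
The plan is to apply Lemma \ref{imprimitivitylemma} to each of the sixteen groups listed in Table \ref{TOIDngroups-table}, using the explicit generating sets $\cL$ and $\cH$ given there, and to organize the case analysis via the inclusion lattice of Figure \ref{TOIinclusionsfigure} so that nonexistence of systems for any $G_K(L,H)$ propagates upward to every group containing it. The conjugacies (\ref{primitiveimprimitiveisos}) have already been established in Example \ref{G12G13G22example}, so only the systems themselves need verification.

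I would first dispose of condition (ii): since it requires every nontrivial $h\in\cH$ to equal $-|q|^2$, a nonpositive real, it fails whenever $\cH$ contains a nonreal quaternion. This immediately eliminates $G_\cT(\cT,\cT)$ and $G_\cO(\cO,\cO)$, whose generators in $\cH$ are $\tfrac{1+i+j+k}{2}$ and $\tfrac{1+i}{\sqrt 2}$ respectively. In every remaining case $\cH\subseteq\{-1\}$, so either (ii) is vacuous ($\cH=\{\}$) or it forces $|q|=1$ ($\cH=\{-1\}$). Writing $q=a+bi+cj+dk$, condition (i) then reduces via (\ref{abcdeqn}) to a homogeneous linear system in $(a,b,c,d)$, together with the alternatives $q=\pm\overline{\gb}$ for $\gb\in\cL$. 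For the three base groups $G_\cT(L_{12}^\cT,1)$, $G_\cO(L_{18}^\cO,1)$ and $G_\cI(L_{30}^\cI,1)$, whose $\cL$ contains only three nonscalar generators, solving the linear system yields a one-parameter family $q=\ga v$ in a specific unit imaginary direction $v$; in the icosahedral case this uses $\tau^2=\tau+1$ and $\tau\gs=-1$ to recover $v=\tfrac{j-\tau i-\gs k}{2}$. Proposition \ref{doublecountinprop} then normalizes to $-1<\ga\le 1$, $\ga\neq 0$, giving the infinite families (c) and (d).

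For the isolated systems in (a) and (b), the crucial observation is that for each of $G_\cT(L_{12}^\cT,C_2)$, $G_\cO(L_{14}^\cO,1)$, $G_\cO(L_{20}^\cO,C_2)$ and $G_\cI(L_{32}^\cI,C_2)$ the extra generator $\gb_0\in\cL$ is precisely the unit imaginary direction $v$ of the corresponding base family. Hence for $q=\ga v$ one computes $\Re(\gb_0 q)=\ga\Re(v^2)=-\ga$, which is nonzero for $\ga\ne 0$, forcing the alternative $q=\pm\overline{\gb_0}=\mp\gb_0$, i.e.\ $\ga=\pm 1$; by Proposition \ref{doublecountinprop} this collapses the family to the single system $(1,v)$. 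For the remaining groups $G_\cT(\cT,Q_8)$, $G_\cO(\cO,\cT)$, $G_\cO(L_{32}^\cO,Q_8)$, $G_\cI(\cI,\cI)$, $G_\cI(L_{20}^\cI,1)$ and $G_\cI(L_{20}^\cI,C_2)$, the set $\cL$ contains an extra generator (such as $j$, $\tfrac{1+j}{\sqrt 2}$ or $\tfrac{i+\gs j+\tau k}{2}$) which is \emph{not} aligned with the respective base direction; the corresponding linear equation combined with those from the base generators then forces $q=0$, and a brief case check rules out every $q=\pm\overline{\gb}$ alternative against the remaining generators, so these groups admit no additional systems. The main obstacle will be the icosahedral base calculation, where careful use of the golden-ratio identities is needed to extract $v=\tfrac{j-\tau i-\gs k}{2}$; the tetrahedral and octahedral analogues are essentially identical, differing only by the substitution $i\leftrightarrow\tfrac{1+i}{\sqrt 2}$ in the second generator of $\cL$.
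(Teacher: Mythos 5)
Your proposal follows essentially the same route as the paper: apply Lemma \ref{imprimitivitylemma} with the generating sets $\cL,\cH$ of Table \ref{TOIDngroups-table}, reduce condition (i) to the homogeneous linear system (\ref{abcdeqn}) after ruling out the $q=\pm\overline{\gb}$ alternatives, solve it for the base groups to get the one-parameter families, and use the inclusion lattice of Figure \ref{TOIinclusionsfigure} to propagate nonexistence upward; the computations you sketch (including the golden-ratio identities for the icosahedral direction) check out. The one inaccuracy is your ``crucial observation'' for the isolated systems: it is correct for $G_\cO(L_{14}^\cO,1)$, $G_\cO(L_{20}^\cO,C_2)$ and $G_\cI(L_{32}^\cI,C_2)$, but $G_\cT(L_{12}^\cT,C_2)$ has the \emph{same} $\cL$ as its base group $G_\cT(L_{12}^\cT,1)$ and differs only by $\cH=\{-1\}$, so there the collapse of the family $q=\ga\,{j-k\over\sqrt{2}}$ to $\ga=\pm1$ comes from condition (ii) forcing $|q|=1$, not from an extra generator in $\cL$. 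This is harmless since you already noted that $\cH=\{-1\}$ forces $|q|=1$, but the sentence should be corrected.
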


\begin{proof} We apply Lemma \ref{imprimitivitylemma}, with $\cL$ and $\cH$
as given in Table \ref{TOIDngroups-table}.
Since 
$$ G(K_1,L_1,H_1) \subset G(K_2,L_2,H_2), \qquad
	\hbox{for}\quad	K_1\subset K_2,\ L_1\subset L_2,\ H_1\subset H_2, $$
it follows that a system of imprimitivity for $G(K_2,L_2,H_2)$ 
is a system of imprimitivity for $G(K_1,L_1,H_1)$. 
Thus, it suffices to start at the bottom of the lattice of inclusions for the
$K=\cT,\cO,\cI$ groups given in Figure \ref{TOIinclusionsfigure}, 
working upwards until a group with 
no additional systems of imprimitivity is identified, at which point
all the additional systems of imprimitivity have been found.

{\em The case $K=\cT$.} The lattice is linear, with bottom element 
$G_\cT(L_{12}^\cT,1)$ of order $48$ given by
$$ \cL=\{1,i,{1+i+j+k\over2}\}, \qquad \cH=\{\}. $$
The pairs of conditions on $q$ given by (i) of Lemma \ref{imprimitivitylemma} are
\begin{align*}
\gb=1: & \qquad \Re(q)=0, \quad q=\pm 1, \cr
\gb=i: & \qquad \Re(iq)=0, \quad q=\pm i, \cr
\gb={1+i+j+k\over2}: & \qquad \Re({1+i+j+k\over2}q)=0, \quad q=\pm {1-i-j-k\over2},
\end{align*}
and one of each pair must hold. This is not possible for any of the choices
for $q$ (which are mutually exclusive), e.g., $q=\pm1$, gives
$$ \Re({1+i+j+k\over2}q)=\pm{1\over2}\ne0. $$
Thus $q=a+bi+cj+dk$ must satisfy
\begin{equation}
\label{threeqcdns}
\Re(q)=0, \quad \Re(iq)=0, \quad \Re({1+i+j+k\over2}q)=0,
\end{equation}
i.e., by (\ref{abcdeqn}), 
$$ a=0, \quad -b=0, \quad a-b-c-d=0 \Implies q=\ga{j-k\over\sqrt{2}}, \quad\ga\in\RR. $$
The group $G_\cT(L_{12}^\cT,C_2)$ above $G_\cT(L_{12}^\cT,1)$ is obtained by adding 
$-1$ to $\cH$, which satisfies (ii), i.e., $h=-1=-|q|^2$ for the choice $\ga=1$,
	giving $q={j-k\over\sqrt{2}}$.
The group $G_\cT(L_{20}^\cT,Q_8)$ above $G_\cT(L_{12}^\cT,C_2)$ 
is obtained by adding $\gb=j$ to $\cL$.
This $\gb$ does not satisfy (i), i.e., 
\begin{equation}
\label{jexclude}
\Re(\gb q)= \Re(j{j-k\over\sqrt{2}})=-{1\over\sqrt{2}}\ne0, \qquad
q={j-k\over\sqrt{2}} \ne \pm j = \pm\overline{\gb}.
\end{equation}
Thus $G_\cT(L_{20}^\cT,Q_8)$ and $G_\cT(L_{20}^\cT,\cT)$ (the group above) have
no additional systems of imprimitivity.

{\em The case $K=\cO$}. The lattice has two minimal elements:
$G_\cO(L_{14}^\cO,1)$ and $G_\cO(L_{18}^\cO,1)$. 
The group $G_\cO(L_{14}^\cO,1)$ has
$$ \cL=\{1,i,{1+i+j+k\over2},{j-k\over\sqrt{2}}\}, \qquad \cH=\{\}, $$
which is the same as for $G_\cT(L_{12}^\cT,1)$, except for the addition 
of $\gb=q={j-k\over\sqrt{2}}$ to $\cL$, which, by construction, satisfies (\ref{threeqcdns}),
and the one of the conditions
$$ \Re(\gb q)=0, \quad q=\pm \overline{\gb}, $$
namely the last, so that there is a system of imprimitivity given by
$q={j-k\over\sqrt{2}}$. 

The other minimal element $G_\cO(L_{18}^\cO,1)$ has
$$ \cL=\{1,{1+i\over\sqrt{2}},{1+i+j+k\over2}\}, \qquad \cH=\{\}. $$
	The conditions which must be satisfied are (one of each of)
$$ \Re(q)=0, \quad q=\pm 1, $$
$$ \Re({1+i\over\sqrt{2}}q)=0, \quad q=\pm {1-i\over\sqrt{2}}, $$
$$ \Re({1+i+j+k\over2} q)=0, \quad q=\pm {1-i-j-k\over2}, $$
and these cannot be satisfied for any of the choices of $q$, thus
we must have
$$ \Re(q)=0, \quad \Re({1+i\over\sqrt{2}}q)=0, \quad\Re({1+i+j+k\over2} q)=0
\Implies q=\ga{j-k\over\sqrt{2}}.$$
We now consider the group $G_\cO(L_{20}^\cO,C_2)$ which contains the two 
groups considered, and so has a system of imprimitivity 
given by $q={j-k\over\sqrt{2}}$ or none. Its $\cL$ is obtained from that
for $G_\cO(L_{18}^\cO,1)$ by adding ${j-k\over\sqrt{2}}$, and 
so it has a system of imprimitivity given by $q={j-k\over\sqrt{2}}$.
The group $G_\cO(L_{32}^\cO,Q_8)$ above $G_\cO(L_{20}^\cO,C_2)$ is
obtained by adding $\gb=j$ to $\cL$, but this does not satisfy
the condition (i), as per (\ref{jexclude}),
and so there a no further systems of imprimitivity for the $\cO$ groups.

{\em The case $K=\cI$}. There are two minimal groups $G_\cI(L_{20}^\cI,1)$
and $G_\cI(L_{30}^\cI,1)$. The $\cL$ for $G_\cI(L_{20}^\cI,1)$ is obtained 
from that for $G_\cT(L_{12}^\cT,1)$ by adding $\gb={1+\gs j+\tau k\over2}$,
so we must have $q=\ga{j-k\over\sqrt{2}}$, but this choice does not satisfy 
(i), i.e., 
$$ \Re(\gb q)=\Re({1+\gs j+\tau k\over2}\ga {j-k\over\sqrt{2}})
=\ga{\tau-\gs\over2}\ne0, \quad q
\ne \pm {1+\gs j+\tau k\over2}, $$
and so $G_\cI(L_{20}^\cI,1)$ has no additional systems of imprimitivity.
Since  $G_\cI(L_{20}^\cI,C_2)$ and $G_\cI(\cI,\cI)$ are above $G_\cI(L_{20}^\cI,1)$
they have no additional systems of imprimitivity.

For $G_\cI(L_{30}^\cI,1)$,
$$ \cL=\{1,{1+i+j+k\over2},{\tau+\gs i-j\over2}\}, $$
none of the choices for $q$ in (i) works, 
and so we seek a $q=a+bi+cj+dk$ satisfying
$$ \Re(q)=0, \quad \Re({1+i+j+k\over2} q)=0, \quad \Re({\tau+\gs i-j\over2})=0, $$
i.e.,
$$ a=0, \quad a-b-c-d=0, \quad \tau a-\gs  b+c=0 
\Implies q=\ga {j-\tau i-\gs k\over2}. $$
The group $G_\cI(L_{32}^\cI,C_2)$ above $G_\cI(L_{30}^\cI,1)$ has its
$\cL$ obtained by adding $\gb={j-\tau i-\gs k\over2}$ to that for $G_\cI(L_{30}^\cI,1)$
(both have $\cH=\{\}$),
so that it has a system of imprimitivity given by $q={j-\tau i-\gs k\over2}$,
and we are finished.
\end{proof}

We observe that the isomorphism $G_\cO(L_{14},1)\to G_\cT(L_{12}^\cT,C_2)$ of
\cite{W25} (Example 4.4) is given by a system of imprimitivity (this is 
Theorem 7.1 of \cite{T25}).

\begin{example}
\label{sysimpconjex}
We have (see Table \ref{TOIDngroups-table}) that
$$ L_{14}^\cO = L_{12}^\cT \cup \{{j-k\over\sqrt{2}},{k-j\over\sqrt{2}}\}, $$
	where $L_{12}^\cT$ is generated by $\{1,i,{1+i+j+k\over2}\}$.
For the second reflection system for $G_\cO(L_{14}^\cO,1)$ given by $(1,{j-k\over\sqrt{2}})$,
take the change of basis matrix
$$ U={1\over\sqrt{2}}\pmat{1&{j-k\over\sqrt{2}}\cr{j-k\over\sqrt{2}}&1}. $$
Then $U^{-1}G_\cO(L_{14},1) U= G_\cT(L_{12}^\cT,C_2)$, 
i.e., $G_\cO(L_{14},1)\cong_\HH G_\cT(L_{12}^\cT,C_2)$, since
$$ U^{-1}\pmat{0&{j-k\over\sqrt{2}}\cr {k-j\over\sqrt{2}}&0} U
	=\pmat{-1&0\cr0&1}, \qquad U^{-1}\pmat{0&b\cr b^{-1}&0} U=\pmat{0&b\cr b^{-1}&0},
	\quad b\in L_{12}^\cT. $$
\end{example}

\begin{table}[h!]
	\caption{The systems of imprimitivity for the (imprimitive) quaternionic
	reflection groups of rank two (see Theorems \ref{G(n,a,b,r)sys}
	and \ref{TOIsys}). Note that
	$G_\cT(L_{12}^\cT,C_2)\cong_\HH G_\cO(L_{14}^\cO,1)$.}
\label{QuaternionicSystems-table}
\begin{center}
\vskip-0.55truecm
\begin{tabular}{ |  >{$}l<{$} | >{$}l<{$} | l | }
\hline
&&\\[-0.3cm]
        G & \hbox{quaternionic} & \hbox{comments} \\[0.1cm]
\hline
&&\\[-0.3cm]
G(n,1,n,2)\ \hbox{($n$ odd)} & (1,0),\, (1,j),\, (1,k) & three systems \\
G(n,1,{n\over2},2)\ \hbox{($n$ even, $n\ne 2$)} & (1,0),\, (1,j),\, (1,k) & three systems \\
G(n,2,{n\over2},1)\ \hbox{($n$ even, ${n\over2}$ odd)} & (1,0),\, (1,j),\, (1,k) & three systems \\
G(2, 1, 1, 2) & (1,0),\, (1,1),\, (1,i),\, (1,j),\, (1,k) & five systems \cite{BW25} \\ 
	G(n,a,b,r)\ \hbox{(all other cases)} & (1,0) & $[n,a,b,r]\in\gL_n^*$ \\
&&\\[-0.3cm]
G_\cT(\cT,\cT) & (1,0) &  \\
G_\cT(\cT,Q_8) & (1,0) &  \\
G_\cT(L_{12}^\cT,C_2) & (1,0),\ (1,{j-k\over\sqrt{2}}) & two, $\cong_\HH G_\cO(L_{14}^\cO,1)$  \\
G_\cT(L_{12}^\cT,1) & (1,\ga {j-k\over\sqrt{2}}),\ -1<\ga\le1 & infinite family, $\cong_\HH G_{12}$ \\
&&\\[-0.3cm]
G_\cO(\cO,\cO) & (1,0) &  \\
G_\cO(\cO,\cT) & (1,0) &  \\
G_\cO(L_{32}^\cO,Q_8) & (1,0) &  \\
G_\cO(L_{20}^\cO,C_2) & (1,0),\ (1,{j-k\over\sqrt{2}}) & two systems \\
G_\cO(L_{18}^\cO,1) & (1,\ga {j-k\over\sqrt{2}}),\ -1<\ga\le1 & infinite family, $\cong_\HH G_{13}$  \\
G_\cO(L_{14}^\cO,1) & (1,0),\ (1,{j-k\over\sqrt{2}}) & two, $\cong_\HH G_\cT(L_{12}^\cT,C_2) $  \\
&&\\[-0.3cm]
G_\cI(\cI,\cI) & (1,0) &  \\
	G_\cI(L_{32}^\cI,C_2) & (1,0),\ (1,{j-\tau i-\gs k\over2})  & two systems \\
G_\cI(L_{30}^\cI,1) & (1,\ga{j-\tau i-\gs k\over2}),\ -1<\ga\le1  & infinite family, $\cong_\HH G_{22}$  \\
G_\cI(L_{20}^\cI,C_2) & (1,0) &  \\
G_\cI(L_{20}^\cI,1) & (1,0) &  \\ [3pt]
\hline
\end{tabular}
\end{center}
\end{table}

\section{Concluding remarks}

We have calculated all the quaternionic systems of imprimitivity for the rank two real, 
complex and quaternionic reflection groups, see Tables \ref{RealSystems-table},
\ref{ComplexSystems-table}, \ref{QuaternionicSystems-table}), respectively.

The systems of primitivity for the quaternionic reflection groups of rank two were
calculated in \cite{T25} (Theorems 6.4, 6.5, 6.6). These results are not directly 
comparable with our results (see Example \ref{TGnabr}), 
as ``copies'' of the reflection groups 
which do not satisfy the inclusions (see Figure \ref{TOIinclusionsfigure}) that we used.
In this regard, see Table \ref{WalTayTOI}.

\begin{table}
\caption{The imprimitive quaternionic reflection groups for $K=\cT,\cO,\cI$ that have 
	additional systems of imprimitivity (left table, six groups up to conjugacy) and those that don't (right table), 
	and the corresponding groups used in \cite{T25}.}
\label{WalTayTOI}
\begin{tabular}{ |  >{$}l<{$} | >{$}l<{$} | >{$}l<{$} | }
\hline
&&\\[-0.3cm]
	G & \hbox{$G$ of \cite{T25}} & |G| \\[0.1cm]
\hline
&&\\[-0.3cm]
G_\cT(L_{12}^\cT,C_2) & G(\cT,C_2,\rho(\gd)) & 96 \\
G_\cT(L_{12}^\cT,1) & G(\cT,1,\rho(\gd)) & 48 \\
G_\cO(L_{20}^\cO,C_2) & G(\cO,C_2,1) & 192 \\
G_\cO(L_{14}^\cO,1) & G(\cO,1,\gb) & 96 \\
G_\cO(L_{18}^\cO,1) & G(\cO,1,\rho(\gd)) & 96 \\
G_\cI(L_{32}^\cI,C_2) & G(\cI,C_2,1) & 480 \\
G_\cI(L_{30}^\cI,1) & G(\cI,1,\rho(j)) & 240 \\
\hline
\end{tabular}
\quad
\begin{tabular}{ |  >{$}l<{$} | >{$}l<{$} | >{$}l<{$} | }
\hline
&&\\[-0.3cm]
        G & \hbox{$G$ of \cite{T25}} & |G| \\[0.1cm]
\hline
&&\\[-0.3cm]
G_\cT(\cT,\cT) & G(\cT,\cT,1) & 1152 \\
G_\cT(\cT,Q_8) & G(\cT,\cD_2,\rho(\gd)) & 384 \\
G_\cO(\cO,\cO) & G(\cO,\cO,1) & 4068 \\
G_\cO(\cO,\cT) & G(\cO,\cT,1) & 2304 \\
G_\cO(L_{32}^\cO,Q_8) & G(\cO,\cD_2,1) & 768 \\
G_\cI(\cI,\cI) & G(\cI,\cI,1) & 28800 \\
G_\cI(L_{20}^\cI,C_2) & G(\cI,C_2,\Theta) & 480 \\
G_\cI(L_{20}^\cI,1) & G(\cI,1,\Theta) & 240 \\
\hline
\end{tabular}
\end{table}

From Table \ref{QuaternionicSystems-table}, we can observe the following.

\begin{corollary} 
\label{refsordertwocor}
An imprimitive quaternionic reflection group of rank two can have more than
one system of imprimitivity only if $H=1,C_2$, i.e., every reflection 
	has order two. 
This is not a sufficient condition, e.g., $G_\cI(L_{20}^\cI,1)$ and 
$G_\cI(L_{20}^\cI,C_2)$ have just one system of imprimitivity.
The number of systems of imprimitivity 
of an imprimitive quaternionic reflection group of rank two
	can be one (infinitely many cases),
	two (three cases), three (three infinite families), five (one case), or
	infinite (three cases).
\end{corollary}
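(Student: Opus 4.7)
The plan is threefold: (1) use Lemma \ref{imprimitivitylemma}(ii) to show that more than one system of imprimitivity forces $H \in \{1, C_2\}$; (2) point to Theorem \ref{TOIsys} for the non-sufficiency; and (3) enumerate the possible counts directly from Theorems \ref{G(n,a,b,r)sys} and \ref{TOIsys} as collated in Table \ref{QuaternionicSystems-table}.

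For (1): since the generators in (\ref{G(K,L,H)defn}) are monomial, the standard basis $(1,0)$ is always a system of imprimitivity for $G(K,L,H)$, so ``more than one system'' means there is an additional system $(1,q)$ with $q \ne 0$ (Proposition \ref{doublecountinprop}). Lemma \ref{imprimitivitylemma}(ii) then forces every $h \in \cH \setminus \{1\}$ to satisfy $h = -|q|^2$, a negative real number. Since $h$ lies in the finite subgroup $K \subset \HH^*$, it is a root of unity, and the only negative-real root of unity is $-1$. Hence $\cH \setminus \{1\} \subseteq \{-1\}$, $|q|=1$, and $H \subseteq \{1,-1\}$, giving $H = 1$ or $H = C_2$. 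For (2), Theorem \ref{TOIsys} shows $G_\cI(L_{20}^\cI,1)$ (with $H=1$) and $G_\cI(L_{20}^\cI,C_2)$ (with $H=C_2$) admit only the standard system, so neither value of $H$ is sufficient by itself.

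For (3), I read off Table \ref{QuaternionicSystems-table}. The one-system cases form an infinite class: every $G(n,a,b,r)$ whose index lies outside parts (a)--(c) of Theorem \ref{G(n,a,b,r)sys}, together with the $K=\cT,\cO,\cI$ groups absent from parts (a)--(d) of Theorem \ref{TOIsys}. The two-system cases are exactly three conjugacy classes, supplied by Theorem \ref{TOIsys}(a)--(b): $G_\cT(L_{12}^\cT,C_2)\cong_\HH G_\cO(L_{14}^\cO,1)$, $G_\cO(L_{20}^\cO,C_2)$, and $G_\cI(L_{32}^\cI,C_2)$. The three-system cases form three infinite families from Theorem \ref{G(n,a,b,r)sys}(a): $G(n,1,n,2)$ ($n$ odd), $G(n,1,n/2,2)$ ($n$ even with $n \ne 2$), and $G(n,2,n/2,1)$ ($n$ even with $n/2$ odd), each adding only $(1,j),(1,k)$ to the standard basis. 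The unique five-system case $G(2,1,1,2)$ arises because the index $[2,1,1,2]$ satisfies both (a) (as $[n,1,n/2,2]$ with $n=2$) and (b) of Theorem \ref{G(n,a,b,r)sys}, contributing $(1,j),(1,k)$ and $(1,1),(1,i)$ respectively. The infinite case occurs precisely for $G_\cT(L_{12}^\cT,1)\cong_\HH G_{12}$, $G_\cO(L_{18}^\cO,1)\cong_\HH G_{13}$, and $G_\cI(L_{30}^\cI,1)\cong_\HH G_{22}$ from Theorem \ref{TOIsys}(c)--(d) together with (\ref{primitiveimprimitiveisos}).

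The main (mild) obstacle is the bookkeeping for (3): one must check that no index of $\gL_n^*$ other than $[2,1,1,2]$ appears in more than one part of Theorem \ref{G(n,a,b,r)sys}, which would otherwise introduce a four-system case or a second five-system case. Since part (b) of that theorem lists only $[2,1,2,1]$ (excluded from $\gL_n^*$ as the complex group $G(4,2,2)$) and $[2,1,1,2]$, the unique collision with part (a) is the one producing $G(2,1,1,2)$, confirming that the counts 1, 2, 3, 5, $\infty$ exhaust all possibilities.
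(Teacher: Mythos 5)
Your proposal is correct and follows essentially the same route as the paper: the corollary is obtained by reading off Table \ref{QuaternionicSystems-table} (i.e., collating Theorems \ref{G(n,a,b,r)sys} and \ref{TOIsys}), and your argument that an extra system forces $h=-|q|^2=-1$, hence $H=1$ or $C_2$, is exactly the paper's Lemma \ref{insightlemma}. Your extra bookkeeping check that $[2,1,1,2]$ is the only index lying in two parts of Theorem \ref{G(n,a,b,r)sys} is a sensible confirmation of the count $1,2,3,5,\infty$.
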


There are also groups for $\cD_n$ 
with just one system of imprimitivity, e.g.,
$G(12,3,4,1)$ and $G(12,2,3,2)$.

The first observation of Corollary \ref{refsordertwocor}
 is the Corollary 6.3 of \cite{T25}.
With hindsight, this can be proved directly, by showing
that reflections of order $m\ge 3$ are incompatible with
multiple systems of imprimitivity.

\begin{lemma} 
\label{insightlemma}
If a group $G\subset M_2(\FF)$ contains a reflection of order $m\ge3$,
then it has at most one system of imprimitivity.
\end{lemma}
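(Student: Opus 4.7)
The plan is to show that if $G$ has a reflection $r$ of order $m \geq 3$ and admits at least one system of imprimitivity (otherwise there is nothing to prove), then this system is uniquely determined by $G$. After a unitary change of basis, we may assume the given system is the standard basis $\{e_1, e_2\}$, so every element of $G$---in particular $r$---is a monomial matrix.

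The first step is to rule out the case that $r$ is off-diagonal in this basis. If $r = \begin{pmatrix} 0 & b \\ c & 0 \end{pmatrix}$, solving $rw = w$ gives $w_1 = bw_2$ and $w_2 = cw_1 = cbw_2$, so a nonzero fixed vector exists only when $cb = 1$. But then a direct computation shows $r^2 = \diag(bc,\, cb) = I$, forcing $m = 2$, contradicting $m \geq 3$. Hence $r$ is diagonal, and being a reflection forces one diagonal entry to be $1$; without loss of generality $r = \diag(h, 1)$ with $h \in \FF^\ast$ of order $m \geq 3$.

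Finally, suppose for contradiction that $G$ has a second, distinct system of imprimitivity. By Proposition \ref{doublecountinprop} it is given by some $(1, q)$ with $0 < |q| \leq 1$. Applying Lemma \ref{abcdq-lemma} to the generator $r$ (so $a = h$, $b = c = 0$, $d = 1$), one of the equations in (\ref{abcdqeqns}) must hold: either $h + |q|^2 = 0$ or $q(h - 1) = 0$. The latter forces $q = 0$ (since $h \neq 1$ and $\HH$ is a division algebra), contradicting $q \neq 0$. The former gives $h = -|q|^2$, a negative real number; but the only real scalars of finite order are $\pm 1$, so $h = -1$ and $m = 2$, again contradicting $m \geq 3$. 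The only subtle point is verifying cleanly that off-diagonal monomial reflections are necessarily involutions; once that is settled, the rest follows at once from Lemma \ref{abcdq-lemma}.
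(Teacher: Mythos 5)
Your proof is correct and follows essentially the same route as the paper: reduce to the case where the first system is the standard basis, observe the order-$m$ reflection must then be $\diag(h,1)$, and derive $h=-|q|^2$ (impossible for $m\ge3$) from the monomiality condition. The only difference is cosmetic --- you work directly from Lemma \ref{abcdq-lemma} and explicitly verify that anti-diagonal monomial reflections are involutions, whereas the paper invokes the specialized Lemma \ref{imprimitivitylemma} and its earlier classification (\ref{type1type2}) of monomial reflections into the two types.
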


\begin{proof}
Suppose, without loss of generality, that $G$ is imprimitive, and so,
after an appropriate conjugation, contains
$$ H:=\cG(\{\},\{h\})=\inpro{\pmat{h&0\cr 0&1}}, $$
where $h$ has order $m\ge3$.
Then, by Lemma \ref{imprimitivitylemma},
for $H$ to have an additional system of imprimitivity given by $(1,q)$, 
we must have $h=-|q|^2$, which is not possible.
\end{proof}

In view of the fact that the real/complex systems  of imprimitivity 
for a rank two real/complex group depends only on the associated collineation group
(see observation 1 in the introduction), Lemma \ref{insightlemma} suggests
that multiple systems of imprimitivity are unusual. Indeed, there is just one
example in each case.

\begin{example} 
\label{multisetsrealcomplex}
There are unique rank two real and complex collineation 
groups which have multiple sets of imprimitivity.
They are $G(2,1,2)$ and $G(4,2,2)$, with systems of imprimitivity given by
	$(1,0)$, $(1,1)$ and $(1,0)$, $(1,1)$, $(1,i)$, respectively.
\end{example}

A rank two quaternionic reflection group with five systems of imprimitivity 
was observed in \cite{BW25}. This is in fact the maximal finite number
of such systems possible.

\begin{example}
\label{KMUBexample}
The three rank two primitive quaternionic reflection groups of type $P$ 
with primitive complexifications given in \cite{C80} have 
the following irreducible imprimitive normal subgroup
$$ K=G(2,1,1,2)=G_{Q_8}(Q_8,C_2)
	=\cG(\{1,i,j,k\},\{\})
	=\cG(\{1,i,j,k\},\{-1\})
	. $$
In \cite{BW25}, it was observed that this group has $10$ reflections
corresponding to the five pairs of mutually unbiased bases
\begin{equation*}
\bigl\{\pmat{1\cr0},\pmat{0\cr1}\bigr\}, \quad 
\bigl\{{1\over\sqrt{2}}\pmat{1\cr\pm1}\bigr\}, \quad
\bigl\{{1\over\sqrt{2}}\pmat{1\cr\pm i}\bigr\}, \quad
\bigl\{{1\over\sqrt{2}}\pmat{1\cr\pm j}\bigr\}, \quad 
\bigl\{{1\over\sqrt{2}}\pmat{1\cr\pm k}\bigr\},
\end{equation*}
which are the systems of imprimitivity for $K$. By Corollary \ref{refsordertwocor},
this is the maximum finite number of systems of imprimitivity for a rank two 
quaternionic reflection group, and the only time that it occurs.

It was also observed that $G(4,1,2,2)$ has three systems of imprimitivity.
This first group in the family $G(n,1,{n\over2},2)$, $n\ge4$, with 
exactly three systems of imprimitivity.
\end{example}

The calculation of systems of imprimitivity can be used to see whether 
a reflection group is imprimitive, or conjugate to another with a different
sized reflection system (see Example \ref{sysimpconjex}).
In this regard, \cite{T25} calculated the systems of imprimitivity
for the infinite families of rank two primitive reflection groups with
nonmonomial imprimitive complexifications of \cite{C80} (Lemma 3.3), 
and determined that three of them are in fact {\em imprimitive}, namely

$$ 
\mathcal C_4\boxdot\mathcal O \cong_\HH G_\cO(L_{20}^\cO,C_2), \quad
\mathcal C_4\boxdot_2\mathcal O \cong_\HH G_\cO(L_{14}^\cO,1) \cong_\HH G_\cT(L_{12}^\cT,C_2), \quad
\mathcal C_4\boxdot\mathcal I \cong_\HH G_\cI(L_{32}^\cI,C_2).
$$





We have repeatedly used our observation that larger groups have fewer systems 
of imprimitivity -- both to prove and understand results. 
We give one final example.

\begin{example}
The reflection system for $G(n,a,b,r)$ contains the ${2n\over a}$ root of unity $\go^a$,
so that we have
$$ G({2n\over a},{2n\over a},2)\subset G({2n\over a},{2n\over a r},2)\subset G(n,a,b,r). $$
Hence, by Theorem \ref{imprimiitivecomplexsys}, 
the possible quaternionic systems of imprimitivity for $G(n,a,b,r)$
are given by the matrices $U$ of (\ref{zjmechanics}). 
One can 
check whether $[g]=U^{-1}gU$ is monomial,
for each generator $g$ of $G(n,a,b,r)$,
to obtain Theorem \ref{G(n,a,b,r)sys}.

The particular case $n=2$ relates the groups of Example \ref{multisetsrealcomplex}
and Example \ref{KMUBexample}
$$ G(4,4,2)\subset G(4,2,2)\subset K= G(2,1,1,2), $$
with Theorem \ref{imprimiitivecomplexsys} suggesting that $(1,1)$, $(1,i)$ may give systems 
of imprimitivity for $K$. 

It appears that the map $G(n,a,b,r)\mapsto G({2n\over a},{2n\over a r},2)$ is one-to-one.
\end{example}





\bibliographystyle{alpha}
\bibliography{references}
\nocite{*}

\end{document}

\section{Extra stuff}

\begin{figure}[!h]
\caption{The inclusions between the reflection groups for $\cT,\cO,\cI$ that 
are given by the reflection system inclusions
$\cT\subset L_{32}^\cO,\cI$ 
	and $L_{12}^\cT\subset L_{18}^\cO,L_{20}^\cI$.}
\label{LTOIinclusionsfigure}
\begin{center}
\begin{tikzpicture}
    \matrix (A) [matrix of nodes, row sep=0.8cm, column sep = 0.5 cm]
    { 
	$G_\cO(\cO,\cO)$ & & &  \\
	$G_\cO(\cO,\cT)$ &&  $G_\cI(\cI,\cI)$ &  \\
	$G_\cO(L_{32}^\cO,Q_8)$ & $G_\cT(\cT,\cT)$ &&  $G_\cI(L_{32}^\cI,C_2)$   \\
	$G_\cO(L_{20}^\cO,C_2)$ & $G_\cT(\cT,Q_8)$ & $G_\cI(L_{20}^\cI,C_2)$ &  $G_\cI(L_{32}^\cI,1)$   \\
$G_\cO(L_{18}^\cO,1)$ & 
	$\mat{{G_\cT(L_{12}^\cT,C_2)}\cr\cong_\HH G_\cO(L_{14}^\cO,1)}$ 
	& $G_\cI(L_{20}^\cI,1)$
	& \\
	& $G_\cT(L_{12}^\cT,1)$ & & \\
    };
    \draw (A-1-1)--(A-2-1);
    \draw (A-2-1)--(A-3-1);
    \draw (A-2-1)--(A-3-2);
    \draw (A-2-3)--(A-3-2);
    \draw (A-2-3)--(A-3-4);

    \draw (A-3-1)--(A-4-1);
    \draw (A-3-1)--(A-4-2);
    \draw (A-3-2)--(A-4-2);
    \draw (A-3-4)--(A-4-4);
    \draw (A-2-3)--(A-4-3);

    \draw (A-4-1)--(A-5-1);
    \draw (A-4-2)--(A-5-2);
    \draw (A-4-3)--(A-5-3);
    \draw (A-4-1)--(A-5-2);
    \draw (A-4-3)--(A-5-2);

    \draw (A-5-1)--(A-6-2);
    \draw (A-5-2)--(A-6-2);
    \draw (A-5-3)--(A-6-2);
\end{tikzpicture}
\end{center}
\end{figure}

\begin{table}[!h]
\caption{Conjugacies imprimitive reflection groups $G=G_K(L,H)=\cG(\cL,\cH)$ obtained from the
     }
\label{Conjugation-table}
\begin{center}
\begin{tabular}{ | >{$}l<{$} | >{$}l<{$} | >{$}l<{$} | >{$}l<{$} | >{$}l<{$} | >{$}l<{$} |}
\hline
        &&&\\[-0.3cm]
	(1,q) & U & g & [g]=U^{-1}gU \\[0.1cm]
\hline
&&&\\[-0.3cm]
	(1,1) & {1\over\sqrt{2}}\pmat{1&-1\cr 1&1} & \pmat{0&1\cr1&0},\pmat{0&i\cr-i&0},\pmat{-1&0\cr0&1} & \pmat{1&0\cr0&-1},\pmat{0&i\cr-i&0},\pmat{0&1\cr1&0} & \\ &&&\\[-0.3cm]
	(1,i) & {1\over\sqrt{2}}\pmat{1&i\cr i&1} & \pmat{0&1\cr1&0},\pmat{0&i\cr-i&0},\pmat{-1&0\cr0&1} & \pmat{0&1\cr1&0}, \pmat{-1&0\cr0&1}, \pmat{0&-i\cr i&0} & \\ &&&\\[-0.3cm]
&&&\\[-0.3cm]
(1,j) & {1\over\sqrt{2}}\pmat{1&j\cr j&1} & \pmat{0&1\cr1&0},\pmat{0&\go\cr\overline{\go}&0},\pmat{0&j\cr-j&0} & \pmat{0&1\cr1&0},\pmat{0&\go\cr\overline{\go}&0},\pmat{-1&0\cr0&1} \\ &&&\\[-0.3cm]
 & & \pmat{-1&0\cr0&1}, \pmat{0&k\cr-k&0} & \pmat{0&-j\cr j&0}, \pmat{0&k\cr-k&0} \\
\rule{0pt}{2em} 
(1,\ga k) & U={1\over\sqrt{1+\ga^2}}\pmat{1&\ga k\cr \ga k&1} & \pmat{0&1\cr1&0},\pmat{0&\go\cr\overline{\go}&0},\pmat{0&j\cr-j&0} & 
\pmat{0&1\cr1&0},\pmat{0&\go\cr\overline{\go}&0},\pmat{0&j\cr-j&0} \\ &&&\\[-0.3cm]
\end{tabular}
\end{center}
\end{table}

If $\go^a,\go^b,\go^{2n\over r}\in\{-1,1\}$, then reflection group
$D(n,a,b,r)$ is a matrix group over $\RR+j\RR$, and so is a complex reflection
group via the obvious automorphism $j\mapsto i$.
We consider what are these indices are.

For $U={1\over\sqrt{2}}\pmat{1&j\cr j&1}$, the map $g\mapsto U^{-1}gU$ gives
$$ \pmat{0&1\cr1&0}\mapsto \pmat{0&1\cr1&0}, \quad
 \pmat{0&\go\cr\overline{\go}&0}\mapsto \pmat{0&\go\cr\overline{\go}&0}, \quad
 \pmat{0&j\cr-j&0}\mapsto \pmat{-1&0\cr0&1},  $$
and also
$$ \pmat{-1&0\cr0&1} \mapsto \pmat{0&-j\cr j&0}, \quad
\pmat{0&k\cr-k&0}\mapsto\pmat{0&k\cr-k&0}. $$
For $U={1\over\sqrt{1+\ga^2}}\pmat{1&\ga k\cr \ga k&1}$, the map $g\mapsto U^{-1}gU$ gives
$$ \pmat{0&1\cr1&0}\mapsto \pmat{0&1\cr1&0}, \quad
 \pmat{0&\go\cr\overline{\go}&0}\mapsto \pmat{0&\go\cr\overline{\go}&0}, \quad
 \pmat{0&j\cr-j&0}\mapsto \pmat{0&j\cr-j&0}, \quad
 $$
and also
$$ \pmat{-1&0\cr0&1} \mapsto {1\over1+\ga^2}\pmat{\ga^2-1&-2\ga k\cr 2\ga k&1-\ga^2},
\quad
\pmat{0&k\cr-k&0}\mapsto {1\over1+\ga^2}\pmat{-2\ga&(1-\ga^2)k\cr (\ga^2-1)k&2\ga},  $$
i.e., it fixes the elements of the group (if $U$ is made into a reflection, then it 
normalises the group).

We consider the case $\cL=\{1,\go,j,-j\}$, i.e., $[n,1,n,1]$. 
We have
$$ \pmat{1&-\ga k\cr \ga k&-1} \pmat{0&\gb\cr\overline{\gb}&0}
\pmat{1&-\ga k\cr \ga k&-1}
=
\pmat{-\ga k\overline{\gb} & \gb\cr -\overline{\gb}&\ga k\gb} 
\pmat{1&-\ga k\cr \ga k&-1} $$
$$ = \pmat{-\ga k\overline{\gb}+\ga\gb k & \ga^2 k\overline{\gb}k-\gb\cr
\ga^2 k\gb k-\overline{\gb} & \ga\overline{\gb} k-\ga k\gb}. $$
For $\gb=1$ this is
$$ = \pmat{0 & -\ga^2-1\cr -\ga^2 -1 & 0}. $$
for $\gb=\go$ this is
$$ = \pmat{0 & \ga^2 k\overline{\go}k-\go\cr
\ga^2 k\go k-\overline{\go} & 0}. $$
for $\gb=j$ this is
$$ = \pmat{0 & -\ga^2j-j\cr
\ga^2 j +j & 0}. $$

The $U$ of (\ref{zj-mat-rep}) has order $8$.  
By scaling the columns of the $U$ of (\ref{Uqdefn}), 
one can obtain a $U$ of order $2$, given by
$$ U= {1\over\sqrt{1+|q|^2}} \pmat{1&\overline{q}\cr q&-1}. $$
When $|q|=1$, this is a reflection with root vector
$$ \pmat{1\cr(\sqrt{2}-1)q}. $$